\providecommand{\U}[1]{\protect\rule{.1in}{.1in}}
\theoremstyle{plain}
\newtheorem{lemma}{Lemma}
\newtheorem{theorem}{Theorem}
\numberwithin{equation}{section}
\begin{document}
\title{On Kakeya-Nikodym type maximal inequalities}
\author{Yakun Xi}
\address{
Department of Mathematics\\
Johns Hopkins University\\
Baltimore, MD 21218, USA\\
Emails:  ykxi@math.jhu.edu }
\date{}
\subjclass{42B25} \keywords {Kakeya maximal function, Nikodym maximal function, Geometric combinatorics.}
\dedicatory{ }

\begin{abstract}
We show that for any dimension $d\ge3$, one can obtain Wolff's $L^{(d+2)/2}$ bound on Kakeya-Nikodym maximal function in $\mathbb R^d$ for $d\ge3$ without the induction on scales argument. The key ingredient is to reduce to a 2-dimensional $L^2$ estimate with an auxiliary maximal function. We also prove that the same $L^{(d+2)/2}$ bound holds for Nikodym maximal function for any manifold $(M^d,g)$ with constant curvature, which generalizes Sogge's results for $d=3$ to any $d\ge3$. As in the 3-dimensional case, we can handle manifolds of constant curvature due to the fact that, in this case, two intersecting geodesics uniquely determine a 2-dimensional totally geodesic submanifold, which allows the use of the auxiliary maximal function.
\end{abstract}
\maketitle
\section{Introduction}
In this paper, we reprove Wolff's $L^{(d+2)/2}$ bound on Kakeya-Nikodym maximal function \cite{wolff} in $\mathbb R^d$ for $d\ge3$ without appealing to induction on scales, by reducing to the 2-dimensional $L^2$ estimate with an auxiliary maximal function. The main argument is a modification of Sogge's work \cite{sogge}. By using a similar strategy and some geometric observations, we are also able to show the same bound holds for Nikodym maximal function on manifold $(M^d,g)$ with constant curvature, which is a generalization of Sogge's work on 3-dimensional case in \cite{sogge}. As in Sogge's work \cite{sogge} for 3-dimensional case, we can handle manifolds of constant curvature due to the fact that, in this case, two intersecting geodesics uniquely determine a 2-dimensional totally geodesic submanifold, which allows the use of the auxiliary maximal function.

The original Kakeya problem, proposed by Kakeya \cite{kakeya} in 1917, is to determine the minimal area needed to continuously rotate a unit line segment in the plane by 180 degrees. In 1928, Besicovitch \cite{besicovitch} showed that such sets may have arbitrary small measure. Moreover, he also constructed subsets of $\mathbb R^d$ of measure zero which contain a unit line segment in every direction. Such sets are called Besicovitch sets or Kakeya sets. The Kakeya conjecture states that any Besicovitch sets in $\mathbb R^d$ must have (Hausdorff or Minkowski) dimension $d$. 

The so-called maximal Kakeya conjecture is actually a stronger one that involves the following Kakeya maximal function
\[f_\delta^*(\xi)=\sup\limits_{a\in \mathbb R^{d}}\frac{1}{|T_\xi^\delta(a)|}\int_{T_\xi^\delta(a)}|f(y)|dy,\]
where $T_\xi^\delta(a)$ is a $1\times\delta\times\cdots\times\delta$ tube centered at $a\in\mathbb R^d$ with direction $\xi\in S^{d-1}$. This maximal conjecture(formulated by Bourgain \cite{bourgain}) says for any $\epsilon>0$
\begin{equation}
\label{I1}
\|f_\delta^*\|_{L^d(S^{d-1})}\le C_\epsilon \delta^{-\epsilon}\|f\|_{L^d(\mathbb R^d)}.
\end{equation}
Interpolating with the trivial $L^1\rightarrow L^\infty$ bound, we see \eqref{I1} is equivalent to
\begin{equation}
\label{I2}
\|f_\delta^*\|_{L^q(S^{d-1})}\le C_\epsilon\delta^{1-\frac{d}{p}-\epsilon}\|f\|_{L^p(\mathbb R^d)},
\end{equation}
where $1\le p\le d$ and $q=(d-1)p'$.

It is well-known(see Lemma 2.15 in \cite{bourgain} for details) that an estimate like \eqref{I2} for a given $p$ would imply that Kakeya sets have (both Hausdorff and Minkowski) dimension at least $p$.
For the case $d=p=2$, \eqref{I1} was proved by Cordoba \cite{cordoba}. However, it is still open for any $d\ge 3$. When $p=(d+1)/2,\ q=(d-1)p'=d+1$, \eqref{I2} follows from Drury \cite{drury} in 1983. In 1991, Bourgain \cite{bourgain} improved this result for each $d\ge3$ to some $p(d)\in((d+1)/2,(d+2)/2)$ by the so-called bush argument, where Bourgain considered the \textquotedblleft{bush}\textquotedblright{} where lots of tubes intersect at a given point. Four years later, Wolff \cite{wolff} generalized Bourgain's bush argument to the hairbrush argument, by considering tubes with lots of \textquotedblleft bushes\textquotedblright{} on them. Combining this hairbrush argument and the induction on scales, Wolff further improved Bourgain's result. Moreover, Wolff also pointed out that the same proof applies to the closely related Nikodym maximal function:
\[f_\delta^{**}(x)=\sup\limits_{x\in\gamma_x}\frac{1}{|T_{\gamma_x}^\delta|}\int_{T_{\gamma_x}^\delta}|f(y)|dy,\]
where $\gamma_x$ denotes the unit line segments that contains the point $x$. It is well-known that a bound like \eqref{I2} for Nikodym maximal function would also imply a corresponding lower bound for the dimension of the compliment of the Nikodym sets. Wolff \cite{wolff} proved the following bound for both Kakeya and Nikodym maximal functions.
\begin{theorem}$\mathrm{(T.\ Wolff,\ 1995)}$\label{THM1}The Kakeya maximal function satisfies
\begin{equation}
\label{I3}
\|f_\delta^*\|_{L^\frac{(d-1)(d+2)}{d}(S^{d-1})}\le C_\epsilon\delta^{1-\frac{2d}{d+2}-\epsilon}\|f\|_{L^{\frac{d+2}{2}}(\mathbb R^d)}.
\end{equation}
Similarly, for the Nikodym maximal function, we have
\begin{equation}
\label{I4}
\|f_\delta^{**}\|_{L^\frac{(d-1)(d+2)}{d}(\mathbb R^d)}\le C_\epsilon\delta^{1-\frac{2d}{d+2}-\epsilon}\|f\|_{L^{\frac{d+2}{2}}(\mathbb R^d)}.
\end{equation}
\end{theorem}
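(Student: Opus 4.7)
The plan is to adapt Wolff's hairbrush argument, replacing the induction-on-scales step with a direct application of a 2-dimensional $L^2$ Kakeya estimate, as in Sogge's three-dimensional argument. By standard linearization of the supremum and $L^p$ duality, matters reduce to proving a geometric lower bound on $|\bigcup_{T \in \mathcal T} T|$ for any collection $\mathcal T$ of $\delta$-tubes with $\delta$-separated directions such that each $T$ intersects a common set $E$ in density at least $\lambda$. Standard pigeonholing ensures $\mathcal T$ is uniform in direction and in the height of its intersection with $E$. I would then carry out Wolff's hairbrush pigeonholing: there exists a tube $T_0 \in \mathcal T$, the stem, which is met by a substantial family $\mathcal H$ of other tubes whose directions are quantitatively transverse to that of $T_0$, and it suffices to bound $|\bigcup_{T \in \mathcal H} T|$ from below.

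The 2-dimensional reduction enters as follows. For each $T \in \mathcal H$, the axis of $T$ together with that of $T_0$ spans a unique 2-plane $\Pi(T)$, and the 2-planes through the axis of $T_0$ form a $(d-2)$-parameter family, parameterized by $S^{d-2}$. Grouping the tubes of $\mathcal H$ by which 2-plane they lie near at scale $\delta$, each group forms a planar $\delta$-tube configuration to which Cordoba's sharp $L^2$ bound, the $d = p = 2$ case of \eqref{I1}, applies directly. The auxiliary maximal function encodes this grouping, essentially averaging the planar Kakeya maximal function in the transverse $S^{d-2}$ parameter. An application of Cauchy--Schwarz over $S^{d-2}$ combined with the planar estimate is then expected to yield the sharp exponent $(d+2)/2$ in \eqref{I3}, with no $\delta^{-\epsilon}$ loss beyond the one already inherent in \eqref{I3}.

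The main obstacle is the combinatorial geometry for $d \ge 4$. When $d = 3$ each direction transverse to $T_0$ determines a 2-plane uniquely, but for $d \ge 4$ the $\delta$-thickening of the hairbrush spreads nontrivially across the $S^{d-2}$ family of 2-planes through $T_0$, and one must control this spread without overcounting. Defining the auxiliary maximal function so that Cordoba's planar bound passes cleanly through this averaging is the crux of the argument, and is precisely the generalization needed beyond Sogge's three-dimensional work. Once \eqref{I3} is established, the Nikodym bound \eqref{I4} follows from the same hairbrush and 2-plane decomposition, since in Euclidean space two intersecting line segments still span a unique 2-plane, exactly as in the Kakeya setting.
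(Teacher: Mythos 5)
Your outline coincides with the strategy the paper actually follows: restricted weak-type reduction, Wolff-style multiplicity pigeonholing into low- and high-multiplicity cases, a hairbrush whose bristles are organized by the 2-planes through the stem's axis, Cordoba's planar $L^2$ bound fed into an auxiliary maximal function, and a bush argument applied to the resulting hairbrushes. However, the step you yourself single out as the crux --- controlling, for $d\ge4$, the overlap of the $\delta$-neighborhoods of the roughly $(\theta/\delta)^{d-2}$ two-planes through the stem --- is exactly where your sketch stops, and without a specific mechanism here the argument degenerates to the Miao--Yang--Zheng version, which loses a factor $\delta^{-(d-3)/2}$ and does not give the exponent $(d+2)/2$ for any $d\neq3$. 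So there is a genuine gap, and it is the central one.

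The paper's resolution has three interlocking pieces you would need to supply. First, the multiplicity pigeonholing carries an extra parameter $\sigma\in[\lambda\delta,1]$: for each bristle one records not only the angle $\theta$ it makes with the stem but also the distance $\sigma$ from the stem's axis at which a $(\log\frac1\delta)^{-1}\lambda$-fraction of its intersection with $E$ lives, and pigeonholing forces $\lambda\lesssim_\epsilon\sigma/\theta\lesssim1$ as in \eqref{eqK12}. Second, the auxiliary maximal function averages only over the annulus $\{y:\mathrm{dist}(y,\gamma_0)\in[\sigma/2,\sigma)\}$; on that annulus the slabs $V_k$ around the 2-planes overlap at most $C(\theta/\sigma)^{d-2}$ times (Lemma \ref{lemma2}), whereas near the axis they would overlap $(\theta/\delta)^{d-2}$ times, which is precisely the overcounting you are worried about. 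Third, the resulting factor $\left(\theta/\sigma\right)^{(d-2)/2}$ in the $L^2$ bound \eqref{eqK7} is converted, via $\theta/\sigma\lesssim\lambda^{-1}$, into a power of $\lambda$ that the hairbrush count can absorb; a plain Cauchy--Schwarz over $S^{d-2}$ without the restriction to the $\sigma$-annulus does not achieve this. A minor further point: the restricted weak-type reduction asks for a lower bound on $|E|$ rather than on $\bigl|\bigcup_{T}T\bigr|$, and the final estimate is the geometric mean of the low-multiplicity bound $|E|\gtrsim\lambda M\delta^{d-1}/N$ and the high-multiplicity (hairbrush) bound, so the hairbrush estimate must be balanced against the low-multiplicity case rather than standing alone.
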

As mentioned before, \eqref{I3} implies that the Hausdorff dimension of a Kakeya set is at least $(d+2)/2$. This is still the best result for the (Hausdorff) Kakeya conjecture when $d=3, 4$. One can get better results for larger $d$ or for Minkowski dimension, see e.g. \cite{bourgain2}, \cite{tao}, \cite{tao2}.

It is easy to see that one can naturally extend the definition of the Nikodym maximal function to manifolds. In 1997, Minicozzi and Sogge \cite{soggem} showed for a general manifold, Drury's result where $p=(d+1)/2$ still holds, but surprisingly, they constructed some examples to show that it is indeed sharp in odd dimensions. In 1999, Sogge \cite{sogge} managed to adapt Wolff's method for the generalized Nikodym maximal function to 3-dimensional manifolds with constant curvature. Combining a modified version of Wolff's multiplicity argument with an auxiliary maximal function, Sogge proved the following
\begin{theorem}$\mathrm{(C.\ Sogge,\ 1999)}$ \label{THM2}Assume that $\mathrm(M^3,g\mathrm)$ has constant curvature. Then for $f$ supported in a compact subset $K$ of a coordinate patch and all $\epsilon>0$
\begin{equation}
\|f_\delta^{**}\|_{L^\frac{10}{3}(M^3)}\le C_\epsilon \delta^{-\frac{1}{5}-\epsilon}\|f\|_{L^\frac{5}{2}(M^3)}.
\end{equation}
\end{theorem}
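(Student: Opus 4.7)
The plan is to adapt Wolff's hairbrush argument to the curved setting, substituting totally geodesic 2-submanifolds for Euclidean 2-planes through a line. The constant-curvature hypothesis is essential for two reasons: it guarantees that any two intersecting geodesics span a unique totally geodesic surface $\Sigma$, and it makes the family of such $\Sigma$ through a fixed geodesic $\gamma_0$ into a well-behaved 1-parameter family that can be discretized at scale $\delta$. As in Wolff, the virtue of this decomposition is that it reduces a 3-dimensional incidence problem to a 2-dimensional $L^2$ problem on each slab.

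First, I would perform the standard reduction of the $L^{5/2}\to L^{10/3}$ bound to a geometric-combinatorial estimate on a $\delta$-separated collection $\{T_j\}_{j=1}^N$ of unit-length geodesic $\delta$-tubes in $M^3$: namely, if $E\subset M^3$ satisfies $|E\cap T_j|\geq\lambda|T_j|$ for every $j$, then $N$ is controlled by $|E|$, $\lambda$, and $\delta$ in Wolff's range. This goes via linearization of the maximal operator, duality, Chebyshev, and pigeonholing the density $\lambda$ and the multiplicity of the tube family. I would also carry out Wolff's two-ends reduction, so that on any single tube the mass is not concentrated in a short subsegment.

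Next, pigeonhole a stem $T_0$ meeting many $T_j$'s transversally (the hairbrush). For each such $T_j$, invoke constant curvature to produce the unique totally geodesic 2-submanifold $\Sigma_j$ containing $\gamma_0\cup\gamma_j$. Discretize the pencil of totally geodesic surfaces through $\gamma_0$ at scale $\delta$, thereby grouping the $T_j$'s into bundles, each associated to a $\delta$-slab around some $\Sigma_\nu$. Within each slab I would apply Cordoba's planar $L^2$ Kakeya estimate on $\Sigma_\nu$, but with $\chi_E$ replaced by the auxiliary maximal function that averages $\chi_E$ over the $\delta$-interval transverse to $\Sigma_\nu$; this is precisely the device Sogge introduced to absorb the normal thickness of the 3D tubes. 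Summing the 2D bounds over $\nu$, using the two-ends control on $T_0$ to convert total multiplicity into a useful lower bound, and comparing with the Chebyshev inequality at the start produces the desired estimate on $N$, and hence the theorem, without any induction on scales.

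The main obstacle is controlling the auxiliary 2D maximal function uniformly as $\Sigma_\nu$ varies over all totally geodesic surfaces through every possible stem $\gamma_0$: one needs a Cordoba-type $L^2$ bound on each $\Sigma_\nu$ with constants independent of $\Sigma_\nu$, together with uniform control of the Jacobians that appear when passing to Fermi coordinates along $\gamma_0$ adapted to the foliation by $\{\Sigma_\nu\}$. Constant curvature is exactly what delivers both the uniqueness of $\Sigma_j$ and the uniform smooth dependence of $\Sigma_\nu$ on $\nu$, and it is at this geometric step that the argument would fail for a general metric.
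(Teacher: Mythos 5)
Your proposal follows essentially the same route as the paper's treatment (Section 3, specialized to $d=3$, which is Sogge's original argument): restricted weak-type reduction to a $\delta$-separated tube family, a pigeonhole/multiplicity dichotomy, a hairbrush whose bristles are sorted into $\delta$-neighborhoods of the totally geodesic 2-surfaces through the stem, an auxiliary maximal function bounded in $L^2$ by reduction to the 2-dimensional curved Nikodym estimate of Mockenhaupt--Seeger--Sogge, and a final bush argument combining the hairbrushes — all without induction on scales. The one point to flag is that the planar input cannot literally be Cordoba's Euclidean bound but must be its constant-curvature analogue, which you correctly identify as the main obstacle and which is exactly what the paper imports from Mockenhaupt--Seeger--Sogge.
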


In his proof, Sogge was able to avoid the induction on scales argument, which is hard to perform in curved space.
In 2013, using Sogge's method, Miao, Yang and Zheng \cite{miao} reproved Wolff's result for Kakeya maximal function in $\mathbb R^3$ without appealing to induction on scales. Indeed, they tried to recover Wolff's bound for any dimension $d\ge3$ by reducing to $(d-1)$-dimensional $L^2$ estimate for the auxiliary maximal function, which would be an induction on dimensions argument that is similar to Bourgain's argument in \cite{bourgain}. Unfortunately, there is a $\delta^{-(d-3)/2}$ loss in the bound for the auxiliary maximal function, which basically prevents one from getting Wolff's bound if $d\neq 3$. 

Our paper is organized as follows. In the first half, we modify Sogge's strategy to show that if we add in some more geometric observations, we can get rid of the $\delta^{-(d-3)/2}$ loss for the auxiliary maximal function, by just reducing to Cordoba's \cite{cordoba} optimal $L^2$ estimate for 2-planes. This modification helps us to recover Wolff's result. In the second half, we adapt the same idea to the Nikodym-type maximal function in the constant curvature case, and extend Sogge's result \cite{sogge} to any dimension $d\ge3$, where we shall of course need a curved version of the optimal $L^2$ estimate for Nikodym maximal function which is due to Mockenhaupt, Seeger and Sogge \cite{soggeg}.

\section{Kakeya maximal function in Euclidean space}
In this section, we reprove \eqref{I3} without appealing to induction on scales. We shall follow the strategy in \cite{sogge} and \cite{miao} closely, and add in some key observations. Throughout this section, we use $C$, $c$ to denote various constants that only depend on the dimension.
\subsection{Preliminaries}
It is well-known that it suffices to prove the following restricted weak type estimate:
\begin{equation}\label{eqK1}
|\{\xi\in S^{d-1}:(\chi_E)_\delta^*(\xi)\ge\lambda\}|\lesssim_\epsilon(\lambda^{-p}\delta^{p-d}|E|)^{\frac{q}{p}},
\end{equation}
where $E$ is contained in the unit ball, $\chi_E$ denotes its characteristic function, $p=\frac{d+2}{2}$ and $q=\frac{(d-1)p}{p-1}$. For the sake of simplicity, we use the notation $A\lesssim_\epsilon B$ throughout the paper to denote $A\le C_\epsilon \delta^{-\epsilon}B$. Similarly, $B\gtrsim_\epsilon A$ means $B\ge c_\epsilon \delta^{\epsilon}A$. 

We start by doing some standard reductions(see e.g. \cite{bourgain}). First, without loss of generality, we can assume that any $\xi, \xi'\in \{\xi\in S^{d-1}:(\chi_E)_\delta^*(\xi)\ge\lambda\} $ have angle $\angle(\xi,\xi')\le1$. Second, we take a maximal $\delta$-separated subset $\{\xi^i\}_{i=1}^M$ of $\{\xi\in S^{d-1}:(\chi_E)_\delta^*(\xi)\ge\lambda\}$, then \eqref{eqK1} is equivalent to

\begin{equation}\label{eqK2}
M\delta^{d-1}\lesssim_\epsilon(\lambda^{-p}\delta^{p-d}|E|)^{\frac{q}{p}},
\end{equation}
which is equivalent to
\begin{equation}\label{eqK3}
|E|^2\gtrsim_\epsilon\lambda^{d+2}\delta^{d-2}(M\delta^{d-1})^\frac{d}{d-1}.\end{equation}
For each $\xi^i$, there is a tube $T_{\xi^i}^\delta:=T_i^\delta$ satisfying 
\begin{equation}|E\cap T_i^\delta|\ge\lambda|T_i^\delta|.\label{eqK4}
\end{equation}
{\bf Remark:} Indeed, we will always assume $\lambda\ge\delta$ in proving \eqref{eqK3}, for the reason that in the case $\lambda\le\delta$, it's trivial that $|E|^2\ge|E\cap T_i^\delta|^2\ge\lambda^2\delta^{2d-2}\ge\lambda^{d+2}\delta^{d-2}\gtrsim\lambda^{d+2}\delta^{d-2}(M\delta^{d-1})^\frac{d}{d-1}$. The last inequality follows from the simple fact $M\delta^{d-1}\lesssim 1$.

We start our proof by applying a multiplicity argument to these tubes, which was first introduced by Wolff. We will be using a strengthened version developed by Sogge, see Lemma 2.5 in \cite{sogge}. This modification by Sogge is crucial if one wants to avoid induction on scales.

\subsection{Multiplicity argument}

Consider parameters $\theta\in[\delta, 1],$ $\sigma \in [\lambda\delta, 1]$.
First, for $1\le j\le M$ and $x\in T_j^\delta$ fixed, let
$$\mathfrak{L}_\theta(x,j)=\{i:x\in T_i^\delta, \angle{(T_j^\delta, T_i^\delta)\in[\theta/{2},\theta)}\}$$
index the tubes $T_i^\delta$ containing $x$ which intersect the fixed tube $T_j^\delta$ at angle comparable to $\theta$. Next, let
$$\mathfrak{L}_\sigma(x,j)=\{i:x\in T_i^\delta, |T_i^\delta\cap\{y\in E: \mathrm{dist}(y,\gamma_j)\in[\sigma/{2},\sigma)\}|\ge(2\log_2\frac{1}{\delta^2})^{-1}\lambda|T_i^\delta|\}$$
index the tubes $T_i^\delta$ containing $x$ which intersect the fixed tube $T_j^\delta$ at $x$ such that there is a non-trivial portion of $T_i^\delta\cap E$ that has distance to $\gamma_j$ comparable to $\sigma$.
Now let
$$\mathfrak{L}_{\theta,\sigma}(x,j)=\mathfrak{L}_{\theta}(x,j)\cap\mathfrak{L}_{\sigma}(x,j),$$
then we have the following
\begin{lemma}\label{lemma1}There are $N\in\mathbb{N}$ and $\theta\in[\delta, 1],$ $\sigma \in [\lambda\delta, 1]$ that fulfill the following two cases

$\mathrm{I}.$ $($Low multiplicity case$)$There are at least $M/2$ values of $j$ for which
\[\left|\left\{x\in T_j^\delta\cap E: \#\{i:x\in T_i^\delta\}\le N\right\}\right|\ge\frac{\lambda}{2}|T_j^\delta|.\]

$\mathrm{II}_{\theta,\sigma}.$ $($High multiplicity case at angle $\theta$ and distance $\sigma$$)$There are at least $M/(2(\log_2{1/\delta^2}))^2$ many values of $j$ for which
\begin{equation}\label{eqK5}\left|\left\{x\in T_j^\delta\cap E: \#\mathfrak{L}_{\theta,\sigma}\ge\frac{N}{(2\log_2\frac{1}{\delta^2})^2}\right\}\right|\ge\frac{\lambda}{(4\log_2\frac{1}{\delta^2})^2}|T_j^\delta|.\end{equation}
\end{lemma}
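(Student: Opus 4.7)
The plan is a nested pigeonhole starting from the failure of Case I. If Case I does not hold, then for at least $M/2$ indices $j$ we have $|\{x \in T_j^\delta \cap E : \#\{i: x \in T_i^\delta\} \le N\}| < \lambda|T_j^\delta|/2$, which combined with the defining inequality $|T_j^\delta \cap E|\ge \lambda|T_j^\delta|$ from \eqref{eqK4} gives $|A_j| \ge \lambda|T_j^\delta|/2$, where
\[A_j := \{x \in T_j^\delta\cap E : \#\{i: x \in T_i^\delta\}>N\}.\]
Call these indices ``high-multiplicity''. The goal is to pigeonhole further to extract a single dyadic pair $(\theta,\sigma)$ and a common index set for which Case II$_{\theta,\sigma}$ holds with the stated constants.

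Set $L := \log_2(1/\delta^2)$. For each high-multiplicity $j$ and each $x \in A_j$, the angles $\angle(T_i^\delta,T_j^\delta)$ with $i \ne j$ lie in $[\delta,1]$ at $O(L)$ dyadic scales, so $\sum_\theta \#\mathfrak{L}_\theta(x,j) \ge N$, and pigeonholing produces a dyadic $\theta_x$ with $\#\mathfrak{L}_{\theta_x}(x,j) \ge N/(2L)$. For each $T_i^\delta \in \mathfrak{L}_{\theta_x}(x,j)$, I then decompose $|T_i^\delta \cap E|\ge \lambda|T_i^\delta|$ dyadically in distance to $\gamma_j$ over the $O(L)$ scales $\sigma \in [\lambda\delta, 1]$. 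The potentially troublesome contribution from $\{\mathrm{dist}(\cdot,\gamma_j)<\lambda\delta\}$ is controlled by the elementary bound
\[|T_i^\delta \cap \{\mathrm{dist}(\cdot,\gamma_j) < \lambda\delta\}| \lesssim (\lambda\delta/\theta_x) \delta^{d-1},\]
which is absorbed into the error when $\theta_x \ge 2L\delta$ and is otherwise folded into the $\sigma = \lambda\delta$ bucket. In either case, a pigeonhole on $\sigma$ produces $\sigma_i \in [\lambda\delta,1]$ with $i \in \mathfrak{L}_{\sigma_i}(x,j)$, up to an adjustment of the constant $(2L)^{-1}$ in the definition of $\mathfrak{L}_\sigma$.

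Pigeonholing the $L$ dyadic values $\sigma_i$ against the $\ge N/(2L)$ tubes in $\mathfrak{L}_{\theta_x}(x,j)$ next selects a common $\sigma_x$ with $\#\mathfrak{L}_{\theta_x,\sigma_x}(x,j)\ge N/(2L^2)\ge N/(2L)^2$. Pigeonholing the $L^2$ pairs $(\theta_x,\sigma_x)$ against $x\in A_j$ then produces a single pair $(\theta_j,\sigma_j)$ for which \eqref{eqK5} is verified on a subset of $A_j$ of measure $\ge \lambda|T_j^\delta|/(2L^2) \ge \lambda|T_j^\delta|/(4L)^2$. A final pigeonhole over the $\ge M/2$ high-multiplicity indices $j$ among the $L^2$ possible pairs $(\theta_j,\sigma_j)$ selects a single $(\theta,\sigma)$ shared by $\ge M/(2L^2) \ge M/(2L)^2$ of them, giving Case II$_{\theta,\sigma}$. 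The main obstacle is the near-distance estimate highlighted above: one has to check that absorbing the $\mathrm{dist}<\lambda\delta$ contribution does not spoil the $(2L)^{-1}\lambda|T_i^\delta|$ threshold in the definition of $\mathfrak{L}_\sigma$, and this is the only place the assumption $\lambda \ge \delta$ from the Remark after \eqref{eqK4} enters; everything else is routine combinatorics.
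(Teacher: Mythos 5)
Your strategy is the same nested pigeonhole as the paper's (whose own proof is only a few lines and suppresses every point you make explicit), but two things need repair before it yields the lemma as stated and as it is used later.

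First, you only establish a dichotomy: for a given $N$, either Case I holds or Case II$_{\theta,\sigma}$ holds for some dyadic pair. But the lemma is invoked in Section 2.4 as ``let $N$ be the number that fulfills \emph{both} case I and $\mathrm{II}_{\theta,\sigma}$,'' so a single $N$ must satisfy both. The paper gets this by taking the minimal $N$ for which Case I holds (Case I is trivial for $N=M$); then Case I fails for $N-1$, and running your argument at that level produces, for at least $M/2$ indices $j$, the sets $A_j=\{x\in T_j^\delta\cap E:\#\{i:x\in T_i^\delta\}\ge N\}$ with $|A_j|\ge\lambda|T_j^\delta|/2$, from which Case II follows for the same $N$. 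This is a one-line addition, but it is needed.

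Second, your handling of the mass at distance $<\lambda\delta$ from $\gamma_j$ is the one step that, as written, does not give the stated conclusion. ``Folding'' that mass into the $\sigma=\lambda\delta$ bucket changes the meaning of $\mathfrak{L}_{\lambda\delta}(x,j)$, which by definition only counts mass at distance in $[\lambda\delta/2,\lambda\delta)$; and the key lemma downstream genuinely needs the selected mass to lie in the annulus $\mathrm{dist}(\cdot,\gamma_j)\in[\sigma/2,\sigma)$ (that is where the auxiliary maximal function is evaluated, and where \eqref{eqK12} comes from). Fortunately the workaround is unnecessary: your volume bound $(\lambda\delta/\theta)\delta^{d-1}$ overcounts the cross-section transverse to $\gamma_j$, which has radius $\lambda\delta$, not $\delta$. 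For $\rho\le\delta\le\theta$ one has
\[
\bigl|T_i^\delta\cap\{y:\mathrm{dist}(y,\gamma_j)<\rho\}\bigr|\;\lesssim\;\rho^{d-1}\delta/\theta\;\le\;\rho\,\delta^{d-2},
\]
so taking $\rho$ a small constant multiple of $\lambda\delta$ makes this a small fraction of $\lambda|T_i^\delta|=\lambda\delta^{d-1}$, uniformly in $\theta\ge\delta$ (this is precisely why the $\sigma$-range is taken down to $\lambda\delta$ rather than $\delta$). Hence at least half of $|T_i^\delta\cap E|$ always survives into the legitimate dyadic buckets $[\sigma/2,\sigma)$ with $\sigma=2^n\lambda\delta$, and no case distinction on $\theta_x$ is required. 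Relatedly, the hypothesis $\lambda\ge\delta$ is not used in this volume estimate; it is used to guarantee that the number of dyadic $\sigma$-scales in $[\lambda\delta,1]$ is at most $\log_2(1/\delta^2)$, so that the pigeonhole produces exactly the constants $(2\log_2\frac{1}{\delta^2})^{-1}$ and $(2\log_2\frac{1}{\delta^2})^{-2}$ appearing in the statement.
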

\begin{proof} Choose the smallest $N\in\mathbb{N}$ that satisfies the low multiplicity case I. Then there must be $M/2$ values of $j$ such that
\begin{equation}\label{eqK6}|\{x\in T_j^\delta\cap E: \#\{i:x\in T_i^\delta\}\ge N\}|\ge\frac{\lambda}{2}|T_j^\delta|.\end{equation}
We claim that for any such fixed $j$ and $x\in T_j^\delta\cap E$ with $\#\{i:x\in T_i^\delta\}\ge N$ we can find $1\le m_{x,j}\le\log_2\frac{1}{\delta}$, and $1\le n_{x,j}\le\log_2\frac{1}{\lambda\delta}\le\log_2\frac{1}{\delta^2}$ such that
\[\#\mathfrak{L}_{2^{m_x}\delta,2^{n_x}\lambda\delta}(x,j)\ge \frac{N}{(2\log_2\frac{1}{\delta^2})^2}.\]
Indeed, if the inequality fails for every pair of such $(m,n)$, summing over them would give us a contradiction. Similarly, for a fixed $j$, using the pigeonhole principle again, we can find some uniform $1\le m_j\le\log_2\frac{1}{\delta}$ and $1\le n_j\le\log_2\frac{1}{\delta^2}$ such that \eqref{eqK5} holds for all such fixed $j$.  Finally, since there are $M/2$ values of $j$ satisfying \eqref{eqK6}, if we use pigeonhole principle one more time, we conclude that we can choose fixed $\theta=2^m\delta, \sigma=2^n\lambda\delta$, so that \eqref{eqK5} holds for at least $M/(2(\log_2{1/\delta^2}))^2$ many values of $j$.
\end{proof}
{\bf Remark:} The reason that we need $\sigma$ to go down to the scale $\lambda\delta$ instead of $\delta$ is that we only have $\lambda|T_j^\delta|$ portion of each $T_j^\delta$ to apply pigeonhole principle, but this does not hurt us thanks to the fact that $\lambda\ge\delta$. Furthermore, noting that for such $\theta, \sigma$ that fulfill $\mathrm{II}_{\theta,\sigma}$, we must have
\begin{equation}\label{eqK12}\lambda\lesssim_\epsilon\frac{\sigma}{\theta}\lesssim 1.\end{equation}
This will be crucial to extend \cite{miao} to any dimension.

\subsection{Auxiliary maximal function}
First we prove a simple geometric lemma which will be useful in our proof and can be easily generalized to the constant curvature setting.

\begin{figure}
  \centering
    \includegraphics[height=8cm]{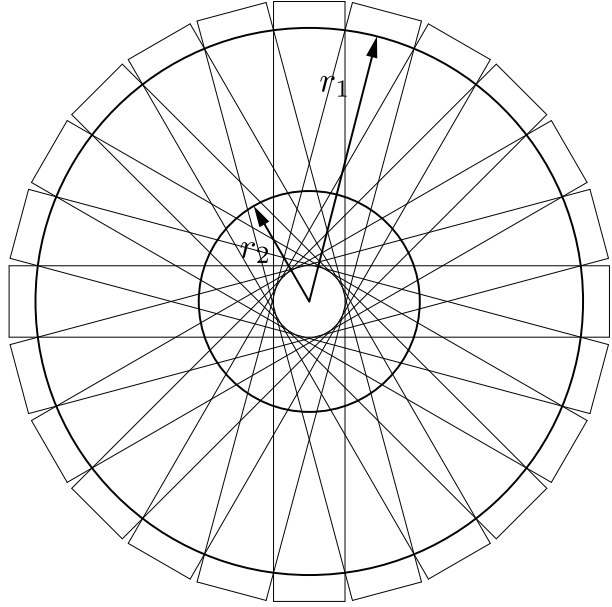}
  \caption{The overlapping of $\{l_k^\delta\}$.}
  \label{3}
\end{figure}

\begin{lemma}\label{lemma2} Let $0<r_2\le r_1<1$, and take a maximal $\delta$-separated subset $\{v_k\}$ on $r_1S^{d-2}$. Let $l_k^\delta$ be the $\delta$-neighborhood of the line passing through the origin with direction $v_k$, then the number of overlaps of $\{l_k^\delta\}$ at some point $y\in r_2S^{d-2}$ is at most
$$C\left(\frac{r_1}{r_2}\right)^{d-2},$$
which implies
$$\sum\limits_k\chi_{l_k^\delta\cap \{y':|y'|\in[r_2/2,r_2)\}}(y')\lesssim \left(\frac{r_1}{r_2}\right)^{d-2}.$$
\end{lemma}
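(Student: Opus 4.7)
The plan is a short geometric/combinatorial argument in three steps: translate membership in a tube into an angular constraint on the direction $v_k$, confine the relevant $v_k$'s to a small spherical cap on $r_1 S^{d-2}$, and then count $\delta$-separated points there by a standard volume/packing bound.

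First, I would fix $y$ with $|y|\in[r_2/2,r_2)$ and unpack $y\in l_k^\delta$. This says $y$ is within distance $\delta$ of the unoriented line $\mathbb{R} v_k$, so if $\alpha_k\in[0,\pi/2]$ denotes the angle between $y$ and this line, then $|y|\sin\alpha_k\le\delta$, which gives $\sin\alpha_k\le 2\delta/r_2$ and hence $\alpha_k\lesssim \delta/r_2$. Rescaling, the direction $v_k/r_1\in S^{d-2}$ lies within angular distance $\lesssim \delta/r_2$ of one of $\pm y/|y|$, and therefore $v_k$ itself lies in the union of two spherical caps on $r_1 S^{d-2}$ of Euclidean radius $R\sim r_1\delta/r_2$.

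Second, I would invoke the $\delta$-separation hypothesis. Since $r_1 S^{d-2}$ is a smooth $(d-2)$-dimensional submanifold and any two of the $v_k$'s are at distance at least $\delta$, the balls of radius $\delta/2$ around the $v_k$'s lying in such a cap are pairwise disjoint and, by a standard volume comparison, there can be at most $C(R/\delta)^{d-2}$ of them in a cap of radius $R$ whenever $R\gtrsim\delta$. Substituting $R\sim r_1\delta/r_2$ produces $C(r_1/r_2)^{d-2}$ points per cap, and hence the same bound (up to a factor of two) for the whole overlap count at $y$. The displayed consequence $\sum_k\chi_{l_k^\delta\cap\{|y'|\in[r_2/2,r_2)\}}(y')\lesssim (r_1/r_2)^{d-2}$ is just the pointwise reformulation.

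I do not anticipate a real obstacle: the argument is elementary and essentially sharp. The only bookkeeping points are the degenerate regime $r_1\sim r_2$, where $R\lesssim \delta$ and the count is trivially $O(1)$ (consistent with $(r_1/r_2)^{d-2}\sim 1$), and making sure to use only the lower bound $|y|\ge r_2/2$ in the angular estimate, so that the conclusion holds uniformly over the annulus $|y'|\in[r_2/2,r_2)$ rather than just on the sphere $r_2 S^{d-2}$. Because this lemma must feed into the auxiliary maximal function bound downstream, it is crucial that the exponent be exactly $d-2$ with no $\delta^{-\epsilon}$ loss; the spherical-cap packing count delivers precisely this.
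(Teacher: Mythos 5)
Your proof is correct and is essentially the same argument as the paper's: both reduce the overlap count to a packing bound for the $\delta$-separated directions, the paper by rescaling the points $v_k$ down to $r_2S^{d-2}$ (where they become $\frac{r_2\delta}{r_1}$-separated and must lie in a $\delta$-ball around $y$), you by keeping them on $r_1S^{d-2}$ and enlarging the cap to radius $\sim r_1\delta/r_2$. The two viewpoints differ only by a dilation, and your extra care with the annulus $|y'|\in[r_2/2,r_2)$ and the degenerate regimes is a welcome but inessential refinement.
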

\begin{proof}See Figure \ref{3}. Since the points $\{r_2v_k\}$ will be $\frac{r_2\delta}{r_1}$-separated on $r_2 S^{d-2}$, the number of overlaps of $\{l_k^\delta\}$ is bounded by 
$$C\frac{\delta^{d-2}}{(\frac{r_2\delta}{r_1})^{d-2}}\sim C\left(\frac{r_1}{r_2}\right)^{d-2},$$ hence the lemma.
\end{proof}
{\bf Remark:} It is easy to extend this result to manifolds with constant curvature. One just needs the simple observation that for two geodesics $\gamma_1(s),\gamma_2(s)$ parametrized by arc length, that satisfy $\gamma_1(0)=\gamma_2(0)$ and $\angle(\gamma_1,\gamma_2)=\beta$, then the distance $l(r)$ between $\gamma_1(r)$ and $\gamma_2(r)$ would satisfy 
$$cr\beta \le l(r)\le Cr\beta,$$
where $c, C$ only depend on the curvature, providing $r\le\min\{1, \frac{1}{2}(\mathrm{injectivity\ radius})\}$.

Within this section, we fix $j$ and consider the tube $\bm T^\delta=T_{\xi^j}^\delta$. We may assume without loss of generality that the central axis $\gamma_j$ of $\bm T^\delta$ is parallel to $e_1$, where $\{e_1,e_2,\ldots,e_d\}$ is an orthogonal normal basis of $\mathbb{R}^d$. For $y\in\mathbb{R}^d$, denote $y=(y_1,y')=(y_1,y_2,y''), \xi=(\xi_1,\xi')=(\xi_1,\xi_2,\xi'')$, where $y',\xi'\in\mathbb{R}^{d-1}, y'',\xi''\in\mathbb{R}^{d-2}$ respectively.

We define the auxiliary maximal function as

$$A_{\delta}^{\theta,\sigma}(f)(\xi)=\sup\limits_{T_\xi^\delta:\bm T^\delta\cap T_\xi^\delta\neq\emptyset, \angle(\bm T^\delta,T_\xi^\delta)\in[\theta/2,\theta)}\frac{1}{|T_\xi^\delta|}\int_{T_\xi^\delta\cap \{y:|y'|\in[\sigma/2,\sigma]\}}|f(y)|dy,$$
and define $A_{\delta}^{\theta,\sigma}(f)(\xi)$ to be zero if $\angle(\bm T^\delta,T_\xi^\delta)$ is outside the interval $[\theta/2,\theta)$.
\begin{theorem}Let $A_{\delta}^{\theta,\sigma}$ as above, then we have
\begin{equation}\label{eqK7}\|A_{\delta}^{\theta,\sigma}(f)\|_{L^2(S^{d-1})}\lesssim\left(\log\frac{1}{\delta}\right)^\frac{1}{2}\left(\frac{\theta}{\sigma}\right)^\frac{d-2}{2}\|f\|_{L^2(\mathbb{R}^d)}.\end{equation}
\label{theorem1}\end{theorem}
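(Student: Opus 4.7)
The plan is to reduce to Cordoba's sharp 2-D $L^2$ Kakeya maximal bound by slicing $\mathbb{R}^d$ into 2-planes containing the fixed axis $\gamma_j$. By standard discretization, $\|A_\delta^{\theta,\sigma}(f)\|_{L^2(S^{d-1})}^2\sim\delta^{d-1}\sum_i|A_\delta^{\theta,\sigma}(f)(\xi^i)|^2$, where $\{\xi^i\}$ is a maximal $\delta$-separated set of admissible directions and $T_i^\delta$ is, for each $i$, a tube nearly realizing the supremum. Writing $\xi^i=(\sqrt{1-|\eta^i|^2},\eta^i)$ with $\eta^i=|\eta^i|\omega^i$, $\omega^i\in S^{d-2}$, I pick a maximal $(\delta/\theta)$-separated net $\{\omega_k\}\subset S^{d-2}$ and sort each $\xi^i$ into the group $G_k$ whose $\omega_k$ is closest to $\omega^i$. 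Let $\Pi_k$ denote the 2-plane spanned by $e_1$ and $(0,\omega_k)$. Since the angle between the axis of $T_i^\delta$ and $\Pi_k$ is at most $\theta|\omega^i-\omega_k|\lesssim\delta$, the tube $T_i^\delta$ lies inside the $C\delta$-neighborhood $\Pi_k^{C\delta}$.

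For each $k$, I reduce to 2-D and apply Cordoba. Define the transverse average $F_k(z)=\int_{\Pi_k^\perp\cap B(0,C\delta)}|f(z+w)|\,dw$ on $\Pi_k$; Cauchy--Schwarz gives $\|F_k\|_{L^2(\Pi_k)}^2\le\delta^{d-2}\|f\cdot\chi_{\Pi_k^{C\delta}}\|_{L^2(\mathbb{R}^d)}^2$. For $T_i^\delta\in G_k$, Fubini yields $\int_{T_i^\delta}|f|\,dy=\int_{\pi_k(T_i^\delta)}F_k(z)\,dz$, where $\pi_k(T_i^\delta)$ is a 2-D $\delta$-tube in $\Pi_k$; the difference between $|T_i^\delta|\sim\delta^{d-1}$ and $|\pi_k(T_i^\delta)|\sim\delta$ contributes a factor of $\delta^{-(d-2)}$ on the normalization side. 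Within $G_k$ the projected 2-D directions $\pi_k(\xi^i)$ form a $\delta$-separated set of $\sim\theta/\delta$ angles in $S^1\cap\Pi_k$, and Cordoba's \cite{cordoba} 2-D estimate applied to the Kakeya maximal function of $F_k$ (restricted to $\{z\in\Pi_k:|z_\perp|\in[\sigma/2,\sigma]\}$, which is comparable to $A_\sigma\cap\Pi_k$ since $\sigma\gtrsim\delta$), together with the weight $\delta^{d-2}$ relating $L^2(S^1)$ and $L^2(S^{d-1})$, gives after cancellation
\[\sum_{i\in G_k}|A_\delta^{\theta,\sigma}(f)(\xi^i)|^2\delta^{d-1}\lesssim\log(1/\delta)\,\|f\cdot\chi_{\Pi_k^{C\delta}\cap A_\sigma}\|_{L^2(\mathbb{R}^d)}^2,\]
where $A_\sigma=\{y:|y'|\in[\sigma/2,\sigma]\}$ is all the auxiliary maximal function sees.

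Summing over $k$ and applying Lemma \ref{lemma2} finishes the proof. At any $y$ with $|y'|\sim\sigma$, the 2-planes $\Pi_k$ containing $y$ in $\Pi_k^{C\delta}$ satisfy $|\omega_k-y'/|y'||\lesssim\delta/\sigma$; since the $\omega_k$'s are $(\delta/\theta)$-separated on $S^{d-2}$, Lemma \ref{lemma2} (applied in the hyperplane $\{y_1=\mathrm{const}\}$ with $r_1=\theta$ and $r_2=\sigma$) yields the pointwise overlap bound $\sum_k\chi_{\Pi_k^{C\delta}}(y)\lesssim(\theta/\sigma)^{d-2}$, so $\sum_k\|f\cdot\chi_{\Pi_k^{C\delta}\cap A_\sigma}\|_{L^2}^2\lesssim(\theta/\sigma)^{d-2}\|f\|_{L^2}^2$. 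The main obstacle is the geometric bookkeeping: (i) the $(\delta/\theta)$-resolution of the $\omega_k$-net is exactly what confines each tube to a single $\Pi_k^{C\delta}$, (ii) Fubini genuinely converts the $d$-dimensional tube integral to a 2-D one on $\Pi_k$ up to the absorbed $\delta^{d-2}$ factor, and (iii) Lemma \ref{lemma2} produces precisely the exponent $(\theta/\sigma)^{(d-2)/2}$ so that Cordoba's sharp 2-D bound suffices without any induction on scales or any loss in $\delta$.
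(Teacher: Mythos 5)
Your proposal is correct and follows essentially the same route as the paper: grouping directions via a $(\delta/\theta)$-net on $S^{d-2}$, confining each tube (which must meet the fixed axis tube) to a $C\delta$-neighborhood of a 2-plane through $e_1$, reducing to Cordoba's sharp 2-D $L^2$ bound on each plane, and summing with the overlap count of Lemma \ref{lemma2} to produce the $(\theta/\sigma)^{(d-2)/2}$ factor. The only differences are cosmetic: you discretize the $L^2(S^{d-1})$ norm and handle the transverse $(d-2)$ directions by a Cauchy--Schwarz average, whereas the paper keeps the continuous integral and slices into parallel 2-planes with Minkowski's inequality.
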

\begin{proof} Write $A_{\delta}^{\theta,\sigma}(f)$ simply as $A(f)$. Clearly, it suffices to estimate the integral
$$\int_{S_+^{d-1}}|A(f)|^2(\xi)dS,$$
where $S_+^{d-1}$ is the half-sphere $\{\xi\in S^{d-1}:\xi_1\ge0\}, $ and $dS$ is the corresponding surface measure.

Since $\angle(\xi,e_1)\in[\theta/2,\theta)$, we see that $\sin\theta/2\le|\xi'|<\sin\theta.$ Let
$$C_\theta=\{\xi'\in\mathbb{R}^{d-1}:\sin\theta/2\le|\xi'|<\sin\theta\}.$$

\begin{figure}
  \centering
    \includegraphics[height=8cm]{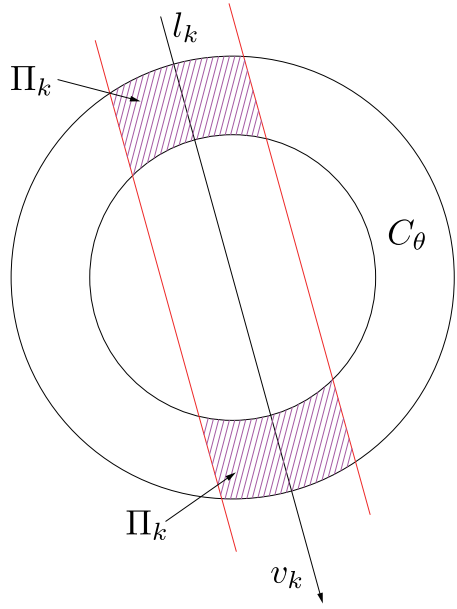}
  \caption{$\Pi_k$ in $\mathbb R^{d-1}$.}
  \label{1}
\end{figure}

Take a maximal $\frac{\delta}{\sin\theta}$-separated subset $\{v_k\}$ of $S^{d-2}$, which has size comparable to $[(\frac{\theta}{\delta})^{d-2}]$. Let $l_k$ be the line passing through the origin with direction $v_k$, and $l_k^\delta$ denotes the $\delta$-neighborhood of $l_k$. Let $\Pi_k=l_k^\delta\cap C_\theta,$ and note that $\{\sin\theta\cdot v_k\}$ is a maximal $\delta$-separated subset in $\sin\theta\cdot S^{d-2}$, so we must have $\cup_k\Pi_k\supset C_\theta$. Again by the maximality of $\{v_k\}$, we see that $\{\Pi_k\}$ has bounded overlap, so they are essentially pairwise disjoint. Indeed, we can take a new collection of sets $\{\Gamma_k\}$ which also covers $C_\theta$, with $\Gamma_1=\Pi_1$, and $\Gamma_k=\Pi_k\setminus\cup_{j=1}^{k-1}\Pi_j$. Clearly each $\Gamma_k$ is nonempty and they are pairwise disjoint.

Taking $r_1=\theta\sim\sin\theta, r_2=\sigma$ in Lemma \ref{lemma1}, we see that
$$\sum\limits_k\chi_{l_k^\delta\cap \{y':|y'|\in[\sigma/2,\sigma)\}}(y')\lesssim \left(\frac{\theta}{\sigma}\right)^{d-2}.$$

Consider $\xi'\in\Gamma_k$ for some $k$. Remember that we require $\bm T^\delta\cap T_\xi^\delta\neq\emptyset$, so the tube $T_\xi^\delta$ with direction $\xi=(\sqrt{1-|\xi'|^2},\xi')$ must lie in a 10$\delta$-neighborhood $H_k^{10\delta}$ of the 2-plane 
$$H_k=\textrm{span}\{e_1,(0,v_k)\},$$
see Figure \ref{2}.
Let
$$V_k=\left\{y\in\mathbb{R}^d:|y_1|\le1\right\}\cap H_k^{10\delta},$$
then clearly
$$\sum\limits_k\chi_{V_k\cap \{y:|y'|\in[\sigma/2,\sigma)\}}(y)\lesssim\sum\limits_k\chi_{l_k^\delta\cap \{y':|y'|\in[\sigma/2,\sigma)\}}(y')\lesssim \left(\frac{\theta}{\sigma}\right)^{d-2}.$$

\begin{figure}
  \centering
    \includegraphics[height=8cm]{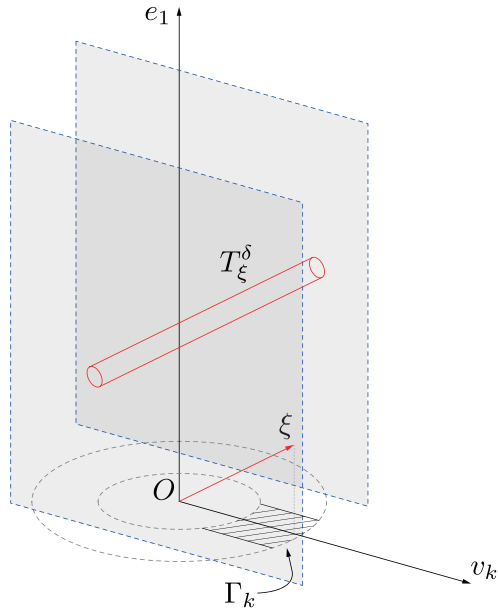}
  \caption{$T_\xi^\delta$ contained in $V_k$}
  \label{2}
\end{figure}

Now we begin to estimate $\int_{S_+^{d-1}}|A(f)|^2(\xi)dS.$ We claim that it suffices to prove the following $L^2$ estimate for each $V_k$,
\begin{equation}\label{eqK8}\|A(f\chi_{V_k})\|_{L^2(\{\xi\in S_+^{d-1}:\xi'\in\Gamma_k\})}\lesssim\left(\log\frac{1}{\delta}\right)^{\frac{1}{2}}\|f\chi_{V_k}\|_{L^2}.\end{equation}
Indeed, noting $\theta\le1$,
$$\begin{aligned}
\int_{S_+^{d-1}}|A(f)|^2(\xi)dS&\lesssim 4\int_{\mathbb R^{d-1}}|A(f)|^2(\sqrt{1-|\xi'|^2},\xi')d\xi'\\
&\lesssim\sum\limits_k\int_{\Gamma_k}|A(f\chi_{V_k})|^2(\sqrt{1-|\xi'|^2},\xi')d\xi'\\
&\lesssim\left(\log\frac{1}{\delta}\right)\sum\limits_k\int_{\mathbb{R}^d}|(f\chi_{V_k})|^2dy\\
&\lesssim\left(\log\frac{1}{\delta}\right)\left(\frac{\theta}{\sigma}\right)^{d-2}\|f\|_2^2.
\end{aligned}$$

Now we prove \eqref{eqK8}. Without loss of generality, assume $(0,v_k)=e_2$, and only consider functions $f$ with support in $V_k\cap\{y:|y'|\in[\sigma/2,\sigma)\}$. 

Let $\mathbf{P}(y'')$ be the 2-plane parallel to $\textrm{span}\{e_1,e_2\}=H_k,$ where $|y''|<10\delta$ and $y''$ is the $(d-2)$-dimensional parameter that determines the position of $\mathbf{P}(y'')$, in other word, the 2-plane $\mathbf{P}(y'')$ passes through the point $(0,0,y'')$.
For any $\xi'\in\Gamma_k, \xi=(\xi_1,\xi')$, the intersection $\mathbf{P}(y'')\cap T_\xi^\delta$ is the intersection of a 2-plane with a $d$-dimensional $\delta$-tube, so clearly it can always be contained in some 2-dimensional tube $t^\delta(y'')$ with direction $\frac{(\xi_1,\xi_2)}{\sqrt{\xi_1^2+\xi_2^2}}$.

Take $r=\sqrt{1-|\xi''|^2}$ then $r\sim\sqrt{1-C\delta^2}\ge\frac{1}{2}$, and let $M_\delta$ be the standard 2-dimensional Kakeya maximal function. Then we have
$$\begin{aligned}
\delta^{-(d-1)}\int_{T_\xi^\delta}|f(y)|dy&=\delta^{-(d-1)}\int_{|y''|\le10\delta}dy''\int_{\mathbf{P}(y'')\cap T_\xi^\delta}|f(y_1,y_2,y'')|dy_1dy_2\\
&\le\delta^{-(d-1)}\int_{|y''|\le10\delta}dy''\int_{t^\delta(y'')}|f(y_1,y_2,y'')|dy_1dy_2\\
&\lesssim\delta^{-(d-2)}\int_{|y''|\le10\delta}M_\delta(f(\ldots,y''))\left(\frac{\sqrt{r^2-|\xi_2|^2},\xi_2}{r}\right)dy'',
\end{aligned}$$
therefore,
$$A(f)(\xi)\lesssim\delta^{-(d-2)}\int_{|y''|\le10\delta}M_\delta(f(\ldots,y''))\left(\frac{\sqrt{r^2-|\xi_2|^2},\xi_2}{r}\right)dy''.$$
Noticing that if $\phi$ is some proper parameter for the subset of $S^1$ where $|\xi_2|\le\sin\theta\le\sin1$, then $|\frac{d\phi}{d\xi_2}|$ is bounded by some constant. Minkowski's inequality gives us
$$\begin{aligned}
&\ \ \ \ \left(\int_{|\xi_2|\le\sin\theta}|A(f)(\xi')|^2d\xi_2\right)^\frac{1}{2}\\&\lesssim\delta^{-(d-2)}\int_{|y''|\le10\delta}dy''\left(\int_{|\xi_2|\le\sin\theta}|M_\delta(f(\ldots,y''))|^2\left(\frac{\sqrt{r^2-|\xi_2|^2},\xi_2}{r}\right)d\xi_2\right)^{\frac{1}{2}}\\
&\lesssim\delta^{-(d-2)}\int_{|y''|\le10\delta}dy''\left(\int_{S^1}|M_\delta(f(\ldots,y''))|^2(\phi)d\phi\right)^{\frac{1}{2}}\\
&\lesssim\left(\log\frac{1}{\delta}\right)^\frac{1}{2}\delta^{-(d-2)}\int_{|y''|\le10\delta}\|f(\ldots,y'')\|_{L^2(y_1,y_2)}dy''\\
&\lesssim\left(\log\frac{1}{\delta}\right)^\frac{1}{2}\delta^{-\frac{(d-2)}{2}}\|f\|_{L^2}.
\end{aligned}$$
Therefore,
$$\begin{aligned}\left(\int_{\{\xi\in S_+^{d-1}:\xi'\in\Gamma_k\}}|A(f)(\xi)|^2dS\right)^\frac{1}{2}&\lesssim\left(\int_{\Gamma_k}|A(f)|^2(\sqrt{1-|\xi'|^2},\xi')d\xi'\right)^\frac{1}{2}\\
&\le\left(\int_{\{\xi':|\xi_2|\le\sin\theta,|\xi''|\le10\delta\}}|A(f)|^2(\sqrt{1-|\xi'|^2},\xi')d\xi'\right)^\frac{1}{2}\\
&=\left(\int_{|\xi''|\le10\delta}d\xi''\int_{|\xi_2|\le\sin\theta}|A(f)(\xi')|^2d\xi_2\right)^\frac{1}{2}\\
&\lesssim\left(\log\frac{1}{\delta}\right)^\frac{1}{2}\delta^{-\frac{(d-2)}{2}}\left(\int_{|\xi''|\le10\delta}\|f\|^2_{L^2}d\xi''\right)^\frac{1}{2}\\
&\lesssim\left(\log\frac{1}{\delta}\right)^\frac{1}{2}\|f\|_{L^2}.
\end{aligned}$$
This finishes the proof of \eqref{eqK8}, hence \eqref{eqK7} is proved.
\end{proof}
{\bf Remark:} The key difference between our auxiliary maximal function estimate and that in \cite{miao} is that we reduce to the optimal $2$-dimensional $L^2$ Kakeya bound for 2-planes rather than reducing to $(d-1)$-dimensional case for hyperplanes. In this way, instead of a $\delta^{-(d-3)/2}$ loss, the extra factor $(\theta/\sigma)^{(d-2)/2}$ we have can be handled using \eqref{eqK12}. This is actually natural if one looks back to Wolff's original hairbrush argument, the 2-dimensional $L^2$ estimate for 2-planes is enough to justify that the \textquotedblleft{bristles}\textquotedblright{} are essentially separated. In other words, reducing to 2-dimensional case already gives the best possible result for the hairbrush argument, so we don't expect improvements by reducing to $(d-1)$-dimensional case.

\subsection{A key lemma}
From now on, let N be the number that fulfills both case I and $\mathrm{II}_{\theta,\sigma}$, and again we fix an index $j$ such that $\bm T^\delta=T_{\xi^j}^\delta$ satisfies $\mathrm{II}_{\theta,\sigma}$. Using our $L^2$ estimate for the auxiliary maximal function, we will show that we can generalize Proposition 2.5 in \cite{sogge} and Lemma 5.2 in \cite{miao} to any dimension $d\ge3$, which was the part where Wolff needed induction on scales in his paper.
\begin{lemma}\label{lemma3}For any $\epsilon>0$, any point $a$
\begin{equation}\label{eqK9}|E\cap B(a,\delta^\epsilon\lambda)^c\cap \bm T^\sigma|\gtrsim_\epsilon\lambda^d N\sigma\delta^{d-2}.
\end{equation}
\end{lemma}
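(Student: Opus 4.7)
The plan is to apply the $L^2$ estimate from Theorem \ref{theorem1} to the indicator function $\chi_{E\cap B(a,\delta^\epsilon\lambda)^c}$, combined with a count of the bristles supplied by case $\mathrm{II}_{\theta,\sigma}$, and then to close the bound using the key inequality $\lambda\lesssim_\epsilon\sigma/\theta$ from \eqref{eqK12}. Since $\bm T^\delta=T_{\xi^j}^\delta$ satisfies $\mathrm{II}_{\theta,\sigma}$, the set
\[E_j=\Bigl\{x\in T_j^\delta\cap E:\#\mathfrak{L}_{\theta,\sigma}(x,j)\ge N/(2\log_2(1/\delta^2))^2\Bigr\}\]
has measure $|E_j|\gtrsim_\epsilon\lambda\delta^{d-1}$. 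For every $x\in E_j$ and every $i\in\mathfrak{L}_{\theta,\sigma}(x,j)$, the tube $T_i^\delta$ is admissible in the supremum defining $A_\delta^{\theta,\sigma}$ (it passes through $x\in\bm T^\delta$ and has angle in $[\theta/2,\theta)$), so the defining inequality of $\mathfrak{L}_\sigma$ forces $A_\delta^{\theta,\sigma}(\chi_E)(\xi^i)\gtrsim_\epsilon\lambda$. Subtracting the small ball is essentially free: $|T_i^\delta\cap B(a,\delta^\epsilon\lambda)|\lesssim\delta^\epsilon\lambda|T_i^\delta|$ while $\delta^\epsilon$ is smaller than any inverse logarithm of $1/\delta$ for small $\delta$; evaluating the sup on $T=T_i^\delta$ one obtains
\[A_\delta^{\theta,\sigma}(\chi_{E\cap B(a,\delta^\epsilon\lambda)^c})(\xi^i)\gtrsim_\epsilon\lambda\]
for every $i\in\mathcal{I}:=\bigcup_{x\in E_j}\mathfrak{L}_{\theta,\sigma}(x,j)$.

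Next I would double count to bound $\#\mathcal{I}$ from below. The defining property of $E_j$ yields
\[\int_{E_j}\#\mathfrak{L}_{\theta,\sigma}(x,j)\,dx\gtrsim_\epsilon\lambda\delta^{d-1}N,\]
whereas for each fixed $i$ the set $\{x\in T_j^\delta:i\in\mathfrak{L}_{\theta,\sigma}(x,j)\}$ is contained in $T_i^\delta\cap T_j^\delta$, whose measure is $\lesssim\delta^d/\theta$ (two $\delta$-tubes meeting at angle $\theta$ intersect in volume $\lesssim\delta^d/\theta$). Comparing,
\[\#\mathcal{I}\gtrsim_\epsilon\frac{\lambda N\theta}{\delta}.\]

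Since $\{\xi^i\}$ is $\delta$-separated on $S^{d-1}$ and $A_\delta^{\theta,\sigma}$ varies by at most a constant factor across a $\delta$-cap (after a harmless fattening of tubes), the discrete sum dominates the $L^2$ norm. Applying Theorem \ref{theorem1} with $f=\chi_{E\cap B(a,\delta^\epsilon\lambda)^c}$,
\[\#\mathcal{I}\cdot\lambda^2\cdot\delta^{d-1}\lesssim_\epsilon\sum_i A_\delta^{\theta,\sigma}(f)(\xi^i)^2\delta^{d-1}\lesssim_\epsilon\left(\frac{\theta}{\sigma}\right)^{d-2}\bigl|E\cap B(a,\delta^\epsilon\lambda)^c\cap\{|y'|\in[\sigma/2,\sigma]\}\bigr|.\]
Substituting $\#\mathcal{I}\gtrsim_\epsilon\lambda N\theta/\delta$ and using $\{|y'|\in[\sigma/2,\sigma]\}\cap\{|y_1|\le 1\}\subset\bm T^\sigma$,
\[|E\cap B(a,\delta^\epsilon\lambda)^c\cap\bm T^\sigma|\gtrsim_\epsilon\lambda^3 N\sigma\delta^{d-2}\left(\frac{\sigma}{\theta}\right)^{d-3}.\]
Finally, \eqref{eqK12} gives $\sigma/\theta\gtrsim_\epsilon\lambda$, so for $d\ge 3$ we have $(\sigma/\theta)^{d-3}\gtrsim_\epsilon\lambda^{d-3}$, which delivers \eqref{eqK9}.

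The main obstacle is accounting for the overcount factor $(\theta/\sigma)^{d-2}$ that Theorem \ref{theorem1} carries relative to the trivial $L^2$ bound. This is cleared by two independent gains: the tube intersection bound $|T_i^\delta\cap T_j^\delta|\lesssim\delta^d/\theta$ supplies the extra factor of $\theta$ in the bristle count, while $\sigma/\theta\gtrsim\lambda$ from \eqref{eqK12} absorbs the surviving $(\sigma/\theta)^{d-3}$ as $\lambda^{d-3}$. It is this second gain that is absent in the $(d-1)$-dimensional reduction of \cite{miao}, and is precisely what removes the $\delta^{-(d-3)/2}$ loss there.
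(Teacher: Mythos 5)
Your proof is correct and follows essentially the same route as the paper: the same double count against the tube-intersection bound $|T_i^\delta\cap T_j^\delta|\lesssim\delta^d/\theta$ to get $\#\mathcal I\gtrsim_\epsilon\lambda N\theta/\delta$ (the paper's $M_0$), the same pointwise lower bound $A_\delta^{\theta,\sigma}(\chi_E)(\xi^i)\gtrsim_\epsilon\lambda$ from the definition of $\mathfrak L_\sigma$, the same application of Theorem \ref{theorem1} restricted to the annulus $\{|y'|\in[\sigma/2,\sigma]\}$, and the same use of \eqref{eqK12} to absorb $(\sigma/\theta)^{d-3}$ as $\lambda^{d-3}$. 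The only cosmetic difference is that you remove the ball $B(a,\delta^\epsilon\lambda)$ inline via the observation $|T_i^\delta\cap B(a,\delta^\epsilon\lambda)|\lesssim\delta^\epsilon\lambda|T_i^\delta|$, whereas the paper performs that reduction up front and cites \cite{sogge}, \cite{miao} for the details.
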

\begin{proof} We claim that it suffices to show
\begin{equation}\label{eqK10}|E\cap \bm T^\sigma|\gtrsim_\epsilon\lambda^d N\sigma\delta^{d-2}.\end{equation}

Indeed, noticing the fact that for $\delta$ sufficiently small, the set $E\cap B(a,\delta^\epsilon\lambda)^c\cap \bm T^\sigma$ has size at least $\frac{1}{2}$ of the size of $E\cap \bm T^\sigma,$ we can replace $E$ by $E\cap B(a,\delta^\epsilon\lambda)^c$ in \eqref{eqK10} and get \eqref{eqK9}. See \cite{sogge} and Proposition 5.2 of \cite{miao} for details.

For the tube $\bm T^\delta$, we denote
$$\bm S^\delta=\bm T^\delta\cap E\cap\left\{x:\#\mathfrak{L}_{\theta,\sigma}(x,j)\ge2^{-2}N\left(\log_2\frac{1}{\delta^2}\right)^{-2}\right\}.$$
By the definition of $\mathfrak{L}_{\theta,\sigma}(x,j)$, we see that there is a $M_0\in(0,M]$ and a subcollection $\{T^\delta_{i_k}\}_{x}$ of $\{T^\delta_i\}_{i=0}^M$ that are in $\mathfrak{L}_{\theta,\sigma}(x,j)$ for each $x$, so that if we let $x$ run through every point in $\bm S^\delta$, and take the union of these subcollections to get $\{T^\delta_{i_k}\}_{k=1}^{M_0}$, then we will have
\[\sum\limits_{k=1}^{M_0}\chi_{T_{i_k}^\delta}\ge\frac{N}{2^2}\left(\log_2\frac{1}{\delta^2}\right)^{-2} \textrm{on}\  \bm S^\delta.\]
Recall that two $\delta$-tubes that intersect at angle $\theta$ would have intersection measure less than $C\frac{\delta^d}{\theta}$, so we have

\[|\bm S^\delta|\lesssim_\epsilon N^{-1}\int_{\bm T^\delta}\sum\limits_{k=1}^{M_0}\chi_{T_{i_k}^\delta}(x)dx\le N^{-1}\sum\limits_{k=1}^{M_0}|T_{i_k}^\delta\cap \bm T^\delta|\lesssim \frac{M_0\delta^d}{N\theta},\]
together with the simple fact
 \[|\bm S^\delta|\gtrsim_\epsilon \lambda|\bm T^\delta|,\]
we conclude
\begin{equation}\label{eqK11}M_0\gtrsim_\epsilon\theta\delta^{-1}N\lambda.\end{equation}

Now, consider the average of function $f=\chi_{E}$ over $T_{i_k}^\delta\cap\{y:\textrm{dist}(y,\gamma_j)\in[\sigma/2,\sigma)\},$ we have
 \[\delta^{-(d-1)}\int_{T_{i_k}^\delta}f(y)dy=\delta^{-(d-1)}\left|T_{i_k}^\delta\cap E\cap \left\{y:\textrm{dist}(y,\gamma_j)\in[\sigma/2,\sigma)\right\}\right|\gtrsim_\epsilon\lambda.\]
 On the other hand,
 \[\delta^{-(d-1)}\int_{T_{i_k}^\delta}f(y)dy\le A_{\delta}^{\theta,\sigma}(f)(\xi_{i_k}).\]
After combining these two inequalities, we square both sides, multiply $\delta^{d-1}$ and sum up with respect to $k=1,\ldots,M_0$, then we have
 \[\begin{aligned}M_0\delta^{d-1}\lambda^2&\lesssim_\epsilon\sum\limits_{k=1}^{M_0}|A_{\delta}^{\theta,\sigma}(f)(\xi_{i_k})|^2\delta^{d-1}\\&\lesssim\|A_{\delta}^{\theta,\sigma}(f)(\xi)\|_{L^2(S^{d-1})}^2\\&\lesssim_\epsilon\frac{\theta^{d-2}}{\sigma^{d-2}}|E\cap \{y:\textrm{dist}(y,\gamma_j)\in[\sigma/2,\sigma)\}|\\&\lesssim_\epsilon\frac{\theta}{\sigma\lambda^{d-3}}|E\cap \bm T^\sigma|,\end{aligned}\]
 where we used the maximality of the $\{\xi_k\}$, \eqref{eqK7} and \eqref{eqK12}. Using \eqref{eqK11} for the estimate of $M_0$, we get \eqref{eqK10}.
 \end{proof}

\subsection{Completion of the proof}
We give the estimate corresponding to high and low multiplicity cases separately, and we start with the simple one.
\lemma For N satisfy $\mathrm I$,
\begin{equation}\label{eqK14}|E|\gtrsim\frac{\lambda M\delta^{d-1}}{N}.\end{equation}
\proof Let $E_0=\{x\in E:\sum_{k=1}^M\chi_{T_k^\delta(x)}\le N\}$. Recalling that $N$ fulfills case I, we know $|T_i^\delta\cap E_0|\ge\lambda|T_i^\delta|/2$ for at least $M/2$ values of $i=i_k$. Thus
$$|E|\ge\left|\bigcup\limits_{k=1}^{M/2}(E_0\cap T_{i_k}^\delta)\right|\ge N^{-1}\sum\limits_{k=1}^{M/2}|E_0\cap T_{i_k}^\delta|\gtrsim\frac{\lambda M\delta^{d-1}}{N}.$$

In order to estimate the high multiplicity case, we need to establish a bush argument for the collection of hairbrushes $\{E\cap T_j^\sigma\}$, where the following lemma plays a key role.
\begin{lemma} Suppose there are $M$ tubes $\{T_j^\sigma\}_{j=1}^M$ such that $j\neq j'$ and $T_j^\sigma\cap T_{j'}^\sigma\neq\emptyset$ implies $\angle(T_j^\sigma, T_{j'}^\sigma)\ge\gamma$ for some $0<\gamma<\frac{\pi}{2}$. Assume also that for some $\rho>0$ and any $a\in\mathbb{R}^d$, there are $M_0$ such tubes satisfying 
\begin{equation}
\rho|T_j^\sigma|\le|T_j^\sigma\cap E\cap B(a,\sigma/\gamma)^c|.
\end{equation}
Then we have
\begin{equation}
|E|\ge\frac{\rho\sigma^{d-1}M_0^{1/2}}{2}.
\end{equation}\label{lemma4}
\end{lemma}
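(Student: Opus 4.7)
The plan is to execute a bush argument of Bourgain--Wolff type, adapted to the $\sigma$-thick tubes. Everything is driven by a dichotomy on the maximal multiplicity
$$\mu:=\max_{x\in\mathbb R^d}\#\{j\in\{1,\ldots,M_0\}:x\in T_j^\sigma\}$$
of the $M_0$ tubes furnished by the hypothesis. I would compare $\mu$ with the threshold $M_0^{1/2}$ and produce the bound $|E|\ge \tfrac12\rho\sigma^{d-1}M_0^{1/2}$ in each regime separately.

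In the low-multiplicity regime $\mu\le M_0^{1/2}$, the argument is a pure volume count. Applying the hypothesis at any fixed basepoint gives each of the $M_0$ tubes the weaker bound $|T_j^\sigma\cap E|\ge\rho|T_j^\sigma|=\rho\sigma^{d-1}$, and the pointwise multiplicity estimate $\sum_j\chi_{T_j^\sigma}\le\mu$ integrated against $\chi_E$ produces
$$M_0\,\rho\,\sigma^{d-1}\ \le\ \sum_{j=1}^{M_0}|T_j^\sigma\cap E|\ =\ \int_E\sum_j\chi_{T_j^\sigma}(x)\,dx\ \le\ \mu|E|,$$
from which $|E|\ge\rho\sigma^{d-1}M_0/\mu\ge\rho\sigma^{d-1}M_0^{1/2}$.

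In the high-multiplicity regime $\mu > M_0^{1/2}$, I would pick a bush centre $a^\ast$ at which the maximum is attained, so that at least $M_0^{1/2}$ of the $M_0$ tubes pass through $a^\ast$. The angular-separation hypothesis forces these tubes to be pairwise $\gamma$-separated in direction, and the crucial geometric input is that $\sigma$-tubes through a common point that meet at angle at least $\gamma$ are essentially disjoint outside $B(a^\ast,\sigma/\gamma)$: at distance $r$ from $a^\ast$ the axes spread apart by $\gtrsim r\sin\gamma\gtrsim r\gamma$ (using $\gamma<\pi/2$ to compare $\sin\gamma$ with $\gamma$), which exceeds the common thickness $\sigma$ as soon as $r\gtrsim\sigma/\gamma$. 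Applying the hypothesis at $a=a^\ast$ to each of the $\mu$ tubes through $a^\ast$ and summing the essentially disjoint pieces then yields
$$|E|\ \ge\ \Bigl|E\cap B(a^\ast,\sigma/\gamma)^c\cap\bigcup_{j:\,a^\ast\in T_j^\sigma}T_j^\sigma\Bigr|\ \ge\ \tfrac12\sum_{j:\,a^\ast\in T_j^\sigma}\bigl|T_j^\sigma\cap E\cap B(a^\ast,\sigma/\gamma)^c\bigr|\ \ge\ \tfrac12\mu\rho\sigma^{d-1},$$
and inserting $\mu > M_0^{1/2}$ closes the high-multiplicity branch.

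The only substantive content is the disjointness claim underlying the second case. I expect the main obstacle to be the careful bookkeeping of the numerical constant, so that the essentially-disjoint sum is controlled by the factor $\tfrac12$ appearing in the statement rather than by some unspecified absolute constant: this requires tracking the exact geometry of two $\sigma$-tubes through a common point intersecting at angle $\gamma\in(0,\pi/2)$, and, if the radius $\sigma/\gamma$ does not by itself yield full disjointness on the nose, replacing it with a harmlessly enlarged radius $C\sigma/\gamma$ for a suitable absolute constant $C$.
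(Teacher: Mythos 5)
Your proof is correct, and it is the same bush argument as the paper's at its geometric core --- both rest on the observation that $\sigma$-tubes through a common point with pairwise angles $\ge\gamma$ are disjoint outside $B(\cdot,\sigma/\gamma)$ --- but the combinatorial wrapper is organized differently. You run a dichotomy on the maximal multiplicity $\mu$ against the threshold $M_0^{1/2}$, handling the low-multiplicity regime by a volume count and the high-multiplicity regime by a bush at the maximizing point. The paper instead does it in one shot: it integrates $\sum_j\chi_{T_j^\sigma}$ over $E$ to locate a point $x_0\in E$ whose multiplicity is at least the $E$-average $\rho\sigma^{d-1}M_0/(2|E|)$, places the bush there, and closes with the quadratic inequality $|E|\ge\rho^2\sigma^{2(d-1)}M_0/(4|E|)$. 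The two are interchangeable here; your version avoids the self-referential inequality in $|E|$ and even gives the slightly better constant $1$ in the low-multiplicity branch, while the paper's averaging trick avoids the case split. Two small remarks. First, your worry about the disjointness constant is one the paper resolves by fiat: it asserts that $\mathrm{diam}(T_j^\sigma\cap T_{j'}^\sigma)\le\sigma/\gamma$, so the tubes are \emph{exactly} disjoint outside $B(x_0,\sigma/\gamma)$ and no factor $\tfrac12$ from bounded overlap is needed; if one insists on an enlarged radius $C\sigma/\gamma$ this must be absorbed into the hypothesis (harmless in the application, where there is $\delta^{-\epsilon}$ slack in $\gamma$), since the hypothesis as stated only excises the ball of radius $\sigma/\gamma$. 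Second, both you and the paper implicitly read the hypothesis as saying that a single subcollection of $M_0$ tubes satisfies the excised-ball bound for \emph{every} $a$ (you apply it at $a=a^\ast$ to tubes selected using a different basepoint, the paper at $a=x_0$ to tubes relabeled beforehand); this is how the lemma is actually used, via Lemma \ref{lemma3}, so it is not a gap in context, but it is worth stating when writing the argument up.
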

\begin{proof} By relabeling the indices, we have a sequence $\{T_j^\sigma\}^{M_0}_{j=1} $ satisfying 
$$\rho\sigma^{d-1}M_0\le\int_E\sum\limits_{j=1}^{M_0}\chi_{T_j^\sigma}(x)dx.$$
Thus, there exists an $x_0\in E$ such that
$$\sum_{j=1}^{M_0}\chi_{T_j^\sigma}(x_0)\ge\frac{\rho\sigma^{d-1}M_0}{2|E|}.$$
Noting that the diameter of $T_{j'}^\sigma\cap T_j^\sigma$ is at most $\sigma/\gamma$, so $B(x_0,\sigma/\gamma)^c \cap T^{\sigma}_j \cap T^{\sigma}_{j'}=\emptyset$, we have
$$|E|\ge\left|E\cap B(x_0, \sigma/\gamma)^c\cap \bigcup\limits_{\{j: x_0\in T_j^\sigma\}}T_j^\sigma\right|\ge\sum\limits_{\{j: x_0\in T_j^\sigma\}}|E\cap B(x_0, \sigma/\gamma)^c\cap T_j^\sigma|\ge\frac{\rho^2\sigma^{2(d-1)}M_0}{4|E|}.$$
\end{proof}
\begin{lemma} Let N satisfy $\mathrm {II}_{\theta,\sigma}$, then we have
\begin{equation}\label{eqK13}
|E|\gtrsim_\epsilon\lambda^{d+1}N(M\delta^{d-1})^{\frac{1}{d-1}}\delta^{d-2}\end{equation}
\end{lemma}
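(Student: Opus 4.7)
The plan is to handle \eqref{eqK13} via a case split based on whether $\sigma$ exceeds the threshold $\lambda\delta M^{1/(d-1)}$. \emph{Case A: $\sigma\ge\lambda\delta M^{1/(d-1)}$.} Here the single-tube estimate from Lemma \ref{lemma3}, applied to any one Case II tube $T_j^\sigma$, already suffices: for any $a\in\mathbb{R}^d$,
\[|E|\ge|E\cap T_j^\sigma\cap B(a,\delta^\epsilon\lambda)^c|\gtrsim_\epsilon\lambda^d N\sigma\delta^{d-2}\ge\lambda^{d+1}NM^{1/(d-1)}\delta^{d-1},\]
which is \eqref{eqK13}.

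\emph{Case B: $\sigma<\lambda\delta M^{1/(d-1)}$.} Since $M\delta^{d-1}\le 1$ forces $\delta M^{1/(d-1)}\le 1$, here $\sigma<\lambda$, so $\gamma=\sigma/(\delta^\epsilon\lambda)$ is a valid angular separation parameter $\lesssim_\epsilon 1$. I would apply Lemma \ref{lemma4} to a maximal $\gamma$-separated subcollection of the $\gtrsim_\epsilon M$ Case II directions $\{\xi^j\}$; since these are originally $\delta$-separated, the subcollection has size $M_0\gtrsim_\epsilon M(\delta/\gamma)^{d-1}\sim_\epsilon M(\lambda\delta/\sigma)^{d-1}$, and the mass hypothesis is supplied by Lemma \ref{lemma3} with the excluded-ball radius $\sigma/\gamma=\delta^\epsilon\lambda$ chosen to match. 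This yields
\[|E|\gtrsim_\epsilon\rho\sigma^{d-1}M_0^{1/2}\gtrsim_\epsilon\lambda^{(3d-1)/2}N\delta^{(3d-5)/2}\sigma^{(3-d)/2}M^{1/2}.\]
For $d=3$ we have $\sigma^{(3-d)/2}=1$ and $M^{1/2}=M^{1/(d-1)}$, so this already matches \eqref{eqK13}. For $d\ge 4$, using the Case B inequality $\sigma<\lambda\delta M^{1/(d-1)}$ to bound the decreasing factor $\sigma^{(3-d)/2}$ from below by $(\lambda\delta M^{1/(d-1)})^{(3-d)/2}$ produces \eqref{eqK13}: the resulting exponents of $\lambda$, $\delta$, $N$, $M$ combine precisely to $d+1$, $d-1$, $1$, $1/(d-1)$.

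The main obstacle is finding the right threshold for the case split. A naive single application of Lemma \ref{lemma4} across all $\sigma$ gives the bound displayed above, which matches \eqref{eqK13} only at the critical $\sigma=\lambda\delta M^{1/(d-1)}$; above this threshold one has to fall back on the single-tube estimate from Lemma \ref{lemma3} directly, and the two bounds cross exactly at this threshold. This is the bush analog of \eqref{eqK12}'s role in the proof of Lemma \ref{lemma3}, where the same angular--radial coupling absorbs the $(\theta/\sigma)^{(d-2)/2}$ loss from the two-plane $L^2$ Kakeya reduction.
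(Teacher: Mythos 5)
Your argument is correct and is essentially the paper's proof: the same bush lemma (Lemma \ref{lemma4}) applied to a maximal $\gamma$-separated subfamily of the Case II directions with $\gamma=\sigma/(\delta^\epsilon\lambda)$, the same count $M_0\gtrsim_\epsilon M\delta^{d-1}(\lambda/\sigma)^{d-1}$, and the mass hypothesis supplied by Lemma \ref{lemma3} with excluded ball of radius $\sigma/\gamma=\delta^\epsilon\lambda$. The only difference is in the final bookkeeping: where you split at the threshold $\sigma=\lambda\delta M^{1/(d-1)}$ (exactly where $M_0\sim 1$) and fall back on the single-tube bound \eqref{eqK9} for larger $\sigma$, the paper handles both regimes at once via $M_0^{1/2}\ge M_0^{1/(d-1)}$ (valid since $M_0\ge1$, $d\ge3$), which makes the $\sigma$-dependence cancel identically.
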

\begin{proof} By the multiplicity argument, we know that for some suitable constant $c$, there are at least 
$$[cM(\log_2\frac{1}{\delta^2})^{-2}]$$ many tubes in $\mathrm{II}_{\theta,\sigma}$, denote them by
$$\{T_j^\delta\}_{j=1}^{[cM(\log_2\frac{1}{\delta^2})^{-2}]}.$$ Let
$$\gamma=\frac{\sigma}{\delta^\epsilon\lambda},$$
then clearly $\gamma\ge\delta^{1-\epsilon}$. If $\gamma\ge\frac{\pi}{2}$, then \eqref{eqK13} follows directly from \eqref{eqK9}.
Otherwise, take a maximal $\gamma$-separated subset of $\{\xi_j\}_{j=1}^{[cM(\log_2\frac{1}{\delta^2})^{-2}]}$ and denote the size of this subset to be $M_0$. By maximality, we see easily
$$M_0\gtrsim\frac{M}{{\left(\log_2\frac{1}{\delta^2}\right)}^{2}}\delta^{d-1}\left(\frac{\delta^\epsilon\lambda}{\sigma}\right)^{d-1}\gtrsim_\epsilon M\delta^{d-1}\left(\frac{\lambda}{\sigma}\right)^{d-1},$$
and using \eqref{eqK9} one may easily check that if we let $\rho=C_\epsilon\lambda^d\sigma^{2-d}\delta^{d-2+\epsilon}N$ for some proper constant $C_\epsilon$ then all requirements of Lemma \ref{lemma4} are fulfilled, so we have
$$
\begin{aligned}
|E|&\gtrsim_\epsilon\lambda^d\sigma^{2-d}\delta^{d-2}N\cdot\sigma^{d-1} M_0^{\frac{1}{2}}\\
&\ge\lambda^d\sigma\delta^{d-2}N M_0^{\frac{1}{d-1}}\\
&\gtrsim_\epsilon\lambda^d\sigma\delta^{d-2}N\left({M\delta^{d-1}\left(\frac{\lambda}{\sigma}\right)^{d-1}}\right)^{\frac{1}{d-1}}\\
&=\lambda^d\sigma\delta^{d-2}N(\delta^{d-1}M)^{\frac{1}{d-1}}\lambda\sigma^{-1}\\
&\ge\lambda^{d+1}N(\delta^{d-1}M)^{\frac{1}{d-1}}\delta^{d-2},
\end{aligned}
$$
where we used the fact that $M_0^{1/2}\ge M_0^{1/(d-1)}$ since $M_0\ge1$ and $d\ge3$.
\end{proof}
Now if we take the geometric mean of \eqref{eqK13} and \eqref{eqK14}, we get \eqref{eqK3}, completing the proof.
\section{Nikodym-type maximal function in spaces of constant curvature}
Once we know how to prove Wolff's result without appealing to induction on scales, it is easy to generalize Sogge's result for Nikodym maximal function in 3-dimensional spaces of constant curvature to any dimension $d\ge3$. This section is parallel to the first half of our paper. Throughout this section, we fix a dimension $d\ge3$ and use $C$, $c$ to denote various constants that only depend on the curvature of the manifold.
\subsection{Preliminaries}
Let $(M^d,g)$ be a Riemannian manifold. Throughout the second half of our paper, we fix a number $\alpha>0$ that is smaller than $\min\{1,\frac{1}{2}\mathrm{inj}M^d\}$, where $\mathrm{inj}M^d$ denotes the injectivity radius of $M^d$. Let $\gamma_x$ denote any geodesic passing through $x\in M^d$ of length $\alpha$. Using the metric, we let
\[T_x^\delta=\{y\in M^d:\mathrm{dist}(y,\gamma_x)\le\delta\}\]
be a tubular $\delta-$neighborhood around $\gamma_x$. We shall also sometimes use the notation $T_{\gamma_x}^\delta$ to denote the same tube. Now given a function $f$ on $M^d$, we can define the Nikodym maximal function
\[f_\delta^{**}(x)=\sup\frac{1}{|T_x^\delta|}\int_{T_x^\delta}|f(y)|dy.\]

Since the Nikodym problem is local, Wolff's result(Theorem \ref{THM1}) implies if $M^d$ has constant curvature $0$, then we have
\[\|f_\delta^{**}\|_{L^{q}(M^d)}\lesssim_\epsilon\delta^{1-\frac{d}{p}}\|f\|_{L^p(M^d)},\ p=\frac{d+2}{2},\ q=(d-1)p'.\]

On the other hand, Sogge \cite{sogge} showed that bounds like this hold in the constant curvature case if $d=3$ (Theorem \ref{THM2}).

The main result of this section is to extend Sogge's result to any dimension $d\ge3.$

\begin{theorem} \label{THM4}Assume that $\mathrm(M^d,g\mathrm)$ has constant curvature. Then for $f$ supported in a compact subset $K$ of a coordinate patch and all $\epsilon>0$
\begin{equation}
\|f_\delta^{**}\|_{L^q(M^d)}\lesssim_\epsilon \delta^{1-\frac{d}{p}}\|f\|_{L^p(M^d)},
\end{equation}
where $1\le p\le\frac{d+2}{2},\ q=(d-1)p'$.
\end{theorem}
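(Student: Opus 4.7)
The approach is to port the Euclidean proof of Section 2 to the constant-curvature setting, substituting geodesics for line segments and totally geodesic 2-submanifolds for affine 2-planes. After localizing in a coordinate patch, replacing $f$ by $\chi_E$, and selecting a maximal $\delta$-separated family of $M$ geodesic directions with representative tubes $T_i^\delta$ satisfying $|E\cap T_i^\delta|\ge\lambda|T_i^\delta|$, the target becomes the analog of \eqref{eqK3}. Lemma \ref{lemma1} is purely combinatorial and transfers unchanged once $\mathrm{dist}(\cdot,\gamma_j)$ is read as the geodesic distance on $M^d$, producing the same low/high multiplicity dichotomy with parameters $\theta,\sigma$ satisfying $\lambda\lesssim\sigma/\theta$.

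The central task is the curved analog of Theorem \ref{theorem1}. Fix a tube $\bm T^\delta$ around a geodesic $\gamma_j$. In constant curvature, any second geodesic meeting $\gamma_j$ lies in a unique 2-dimensional totally geodesic submanifold $\Sigma$ determined by the direction $v\in S^{d-2}$ that its initial tangent makes inside the normal bundle of $\gamma_j$. I take a maximal $(\delta/\sin\theta)$-separated subset $\{v_k\}$ of $S^{d-2}$ and let $V_k$ be the $O(\delta)$-neighborhood of the corresponding $\Sigma_{v_k}$ inside the coordinate patch. The remark after Lemma \ref{lemma2}, namely that two geodesics emanating from a common point at angle $\beta$ separate at rate $l(r)\sim r\beta$ up to curvature-dependent constants, gives the overlap bound
\begin{equation*}
\sum_k\chi_{V_k\cap\{y:\mathrm{dist}(y,\gamma_j)\in[\sigma/2,\sigma)\}}\lesssim\left(\frac{\theta}{\sigma}\right)^{d-2}.
\end{equation*}
Slicing each $V_k$ by parallel totally geodesic 2-submanifolds via Fermi coordinates along $\Sigma_{v_k}$, and applying the curved 2-dimensional $L^2$ Nikodym bound of Mockenhaupt-Seeger-Sogge \cite{soggeg} in place of Cordoba's theorem, the Minkowski-inequality computation of Theorem \ref{theorem1} yields
\begin{equation*}
\|A_\delta^{\theta,\sigma}(f)\|_{L^2}\lesssim\left(\log\tfrac{1}{\delta}\right)^{1/2}\left(\frac{\theta}{\sigma}\right)^{(d-2)/2}\|f\|_{L^2},
\end{equation*}
where the outer norm is taken over the $S^{d-1}$ of geodesic directions at a base point parametrizing the allowed tubes.

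With this estimate in hand, the hairbrush-type Lemma \ref{lemma3} follows essentially verbatim: two $\delta$-tubes meeting at angle $\theta$ still intersect in measure $\lesssim\delta^d/\theta$ within a coordinate patch, and the angular separation $\lambda\lesssim\sigma/\theta$ absorbs the $(\theta/\sigma)^{d-2}$ factor as in the flat case. The low-multiplicity bound \eqref{eqK14} is purely combinatorial. For the high-multiplicity case, the bush argument of Lemma \ref{lemma4} is metric in nature and applies on any Riemannian manifold if geodesic balls are used in place of Euclidean balls; combining it with the curved hairbrush lemma and $\gamma=\sigma/(\delta^\epsilon\lambda)$ reproduces \eqref{eqK13}. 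Taking the geometric mean of the two bounds yields the endpoint $p=\frac{d+2}{2}$ of Theorem \ref{THM4}, and interpolation with the trivial $L^1\to L^\infty$ estimate covers the full range $1\le p\le\frac{d+2}{2}$.

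The main obstacle is verifying that the 2-submanifold slicing is geometrically clean enough to invoke the Mockenhaupt-Seeger-Sogge estimate uniformly. One must check that Fermi coordinates along each $\Sigma_{v_k}$ are non-degenerate up to unit scale, which is precisely where $\alpha<\frac{1}{2}\,\mathrm{inj}\,M^d$ enters, and that the parallel slices of $V_k$ really are totally geodesic 2-submanifolds with uniform geometric constants. This last point is exactly where the constant-curvature hypothesis is essential, via the uniqueness of the totally geodesic 2-submanifold through two intersecting geodesics; in non-constant curvature, generic pairs of intersecting geodesics fail to lie on any such submanifold, and both the overlap lemma and the 2-dimensional reduction would break down.
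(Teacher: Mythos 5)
Your proposal follows essentially the same route as the paper: Fermi coordinates along the fixed geodesic, a maximal $(\delta/\sin\theta)$-separated family $\{v_k\}\subset S^{d-2}$ whose totally geodesic Fermi 2-planes $H_k$ carry the tubes, the overlap bound $(\theta/\sigma)^{d-2}$ from the geodesic-spreading remark after Lemma \ref{lemma2}, reduction to the Mockenhaupt--Seeger--Sogge 2-dimensional $L^2$ Nikodym estimate, and then the identical multiplicity/hairbrush/bush scheme with the geometric mean of the two cases. One small correction to your "main obstacle'' paragraph: the parallel Fermi slices $\mathbf{P}(s)$ with $s\neq0$ are generally \emph{not} totally geodesic even in constant curvature; the paper only uses that the central slice $\mathbf{P}(0)$ is, and treats the others by projecting each cross-section $\mathbf{P}(s)\cap T^{\delta}_{\gamma_{x'}}$ into $\mathbf{P}(0)\cap T^{C'\delta}_{\gamma_{x'}}$ and exploiting the smooth dependence of the volume element $dV_s$ on $s$.
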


Clearly, the $L^1\rightarrow L^\infty$ bounds are trivial, so it suffices to prove the following restricted weak-type estimate

\begin{equation}\label{eqn1}
|\{x\in M^d:(\chi_E)_\delta^{**}(x)\ge\lambda\}|\lesssim_\epsilon(\lambda^{-\frac{d+2}{2}}\delta^{\frac{2-d}{2}}|E|)^{\frac{2d-2}{d}},
\end{equation}
where $E$ is a set contained in our coordinate patch.

Before turning to the proof of \eqref{eqn1}, we quote a useful geometric lemma which is essentially in \cite{soggem}.
\begin{lemma}
\label{lemman0}
Suppose $\gamma_1,\gamma_2$ are geodesics of length $\alpha$ and assume that the $\gamma_j$ belong to a fixed compact subset $K$ of $M^d$. Suppose also $a\in T_{\gamma_1}^\delta\cap T_{\gamma_2}^\delta$. Then there is a constant $c>0$, depending on $(M^d,g)$ and $K$, so if
\[\angle(T_{\gamma_1}^\delta, T_{\gamma_2}^\delta)\ge\frac{\delta}{c\lambda},\]
then we have
\[(T_{\gamma_1}^\delta\cap T_{\gamma_2}^\delta)\setminus B(a,\lambda)=\emptyset.\]
\end{lemma}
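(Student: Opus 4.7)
The plan is to reduce to explicit trigonometry in a model constant-curvature space. Since $(M^d,g)$ has constant curvature and the geodesics have length $\alpha\le\tfrac12\mathrm{inj}\,M^d$ and lie in the compact $K$, a neighborhood of $\gamma_1\cup\gamma_2\cup T_{\gamma_1}^\delta\cup T_{\gamma_2}^\delta$ is isometric to an open set in one of the three model spaces ($\mathbb{R}^d$, $S^d$, or $H^d$), so I shall work in such a model space throughout and absorb the (uniform) curvature-dependent constants into the letter $C$.

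The key geometric fact is the constant-curvature comparison already recorded in the remark after Lemma \ref{lemma2}: if two unit-speed geodesics $\sigma_1,\sigma_2$ emanate from a common point with angle $\beta$, then for $0<r\le\alpha$,
\[cr\sin\beta\le \mathrm{dist}(\sigma_1(r),\sigma_2)\le Cr\sin\beta,\]
with $c,C$ depending only on curvature and on $\alpha$. Since $a$ lies in both tubes one has $\mathrm{dist}(\gamma_1,\gamma_2)\le 2\delta$, so I pick $q_1\in\gamma_1$, $q_2\in\gamma_2$ realizing that minimum distance and parallel-transport $\dot\gamma_2(q_2)$ along the short minimizing segment from $q_2$ to $q_1$ to obtain a geodesic $\tilde\gamma_2$ through $q_1$. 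Using the upper estimate of the display and the fact that parallel transport over distance $2\delta$ changes directions by $O(\delta)$, I get that $\angle(\gamma_1,\tilde\gamma_2)$ at $q_1$ is comparable to $\theta:=\angle(T_{\gamma_1}^\delta,T_{\gamma_2}^\delta)$ and $\mathrm{dist}(\gamma_2,\tilde\gamma_2)=O(\delta)$ along the relevant segments.

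For any $b\in T_{\gamma_1}^\delta\cap T_{\gamma_2}^\delta$, let $b_1$ be its closest-point projection onto $\gamma_1$, so that $d(b,b_1)\le\delta$ and hence $\mathrm{dist}(b_1,\tilde\gamma_2)\le 3\delta$. The lower estimate of the display above, applied to $(\gamma_1,\tilde\gamma_2)$ based at $q_1$, then yields $d(q_1,b_1)\le 3\delta/(c\sin\theta)\le 6\delta/(c\theta)$, and the same bound holds for the projection $a_1$ of $a$. The triangle inequality gives
\[d(a,b)\le d(a,a_1)+d(a_1,q_1)+d(q_1,b_1)+d(b_1,b)\le C'\delta/\theta,\]
and choosing the constant in the hypothesis as $c=1/C'$ forces $d(a,b)\le\lambda$ whenever $\theta\ge\delta/(c\lambda)$, which is exactly the claim $(T_{\gamma_1}^\delta\cap T_{\gamma_2}^\delta)\setminus B(a,\lambda)=\emptyset$. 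The main technical step is the parallel-transport perturbation showing $\tilde\gamma_2$ is $O(\delta)$-close to $\gamma_2$ with nearly the correct angle; this is standard but must be done uniformly in the base point, which is where compactness of $K$ and the assumption $\delta\ll\alpha$ enter to keep all constants independent of $\delta$.
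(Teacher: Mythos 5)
The paper does not actually prove this lemma: it is quoted as ``essentially in \cite{soggem}'' (Minicozzi--Sogge), so there is no in-paper argument to compare against. Your proof is a correct, self-contained version of the standard argument, and it is in the same spirit as the one in the cited reference: reduce to the comparison estimate $cr\sin\beta\le\mathrm{dist}(\sigma_1(r),\sigma_2)$ for geodesics meeting at angle $\beta$, perturb $\gamma_2$ to a geodesic $\tilde\gamma_2$ actually passing through a point of $\gamma_1$, and conclude that every point of $T_{\gamma_1}^\delta\cap T_{\gamma_2}^\delta$ projects onto $\gamma_1$ within distance $O(\delta/\theta)$ of $q_1$. Two small points are worth making explicit. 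First, the paper's angle $\angle(T_{\gamma_1}^\delta,T_{\gamma_2}^\delta)$ is a distance in the unit tangent bundle, so it bounds from above the sum of the base separation (here $\le 2\delta$) and the genuine angle after parallel transport; your claim that $\angle(\gamma_1,\tilde\gamma_2)\sim\theta$ therefore needs $\theta\gg\delta$, which does hold in the hypothesis regime $\theta\ge\delta/(c\lambda)$ with $\lambda\lesssim1$ and $c$ small, but should be said. Second, your reduction to the three model spaces uses constant curvature, whereas the lemma as stated (and as proved in \cite{soggem}) holds for general $(M^d,g)$ via Jacobi field/comparison estimates on the compact set $K$; this costs nothing for the present paper, where the whole section assumes constant curvature, but it does make your proof slightly less general than the quoted result.
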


Here we are using the induced metric on the unit tangent bundle to define the angle between two geodesics(tubes) $\gamma_1,\gamma_2$ of length $\alpha$
\[\angle(T_{\gamma_1}^\delta, T_{\gamma_2}^\delta)=\angle(\gamma_1,\gamma_2)=\min\limits_{x_j\in\gamma_j,\tau_j=\gamma'_j|_{\gamma_j=x_j}}\mathrm{dist}_{UTM^d}((x_1,\tau_1),(x_2,\tau_2)).\]
Here $\gamma'_j|_{\gamma_j=x_j}$ denotes a unit tangent vector at $x_j$.

As in \cite{sogge}, \cite{wolff} and \cite{bourgain}, it is convenient to work with a discrete form of the problem. 

We fix a geodesic $\gamma_0$ and work in Fermi normal coordinates near $\gamma_0$. To obtain these Fermi normal coordinates, we first fix a point $x_0\in\gamma_0$ and then choose an orthonormal basis $\{e_k\}_{k=1}^{d}\subset T_{x_0}M^d$ with $e_1$ being a unit tangent vector of $\gamma_0$ at $x_0$. Using parallel transport, one propagates this basis to every point of $\gamma_0$. If we choose $\gamma_0(s)$ to be the arc length parameterization of $\gamma_0$ with $\gamma_0(0)=x_0$ and $\gamma_0'(0)=e_1$, then the resulting vectors $\{e_k(s)\}$ will be orthonormal in $T_{\gamma_0(s)}M^d$ and $\gamma'(s)=e_1(s)$. We then assign Fermi coordinates $(x_1,x_2,\ldots,x_d)=(x,x')$ to a point $x$, if it is the endpoint of the geodesic of length $|x'|$ starting at $\gamma_0(x_1)$ with tangent vector $(0,x')$. 

These coordinates provide us with some good properties. First, the rays $t\rightarrow(x_1,tx')$ are geodesics orthogonal to $\bm\gamma$. Second, by construction we see that the vector fields $\partial x_k$ are parallel along $\bm\gamma$. Also, these Fermi normal coordinates are unique up to rotations preserving the $x_1$-axis. See details in \cite{sogge}.

Now we fix a small number $c>0$, and consider only the geodesics $\gamma$ that, belong to the collection
\[G=\{\gamma_{x'}: (0,x')\in\gamma_{x'}\ \mathrm{for\ some}\ x', \angle(\gamma_{x'},\gamma_0)\le c\}.\]

Then for a large fixed constant $C_0$, we consider a $C_0\delta$-separated collection $\{x'_j\}^M_{j=1}$ of the set $\{(0,x')\in M^d:(\chi_E)_\delta^{**}(0,x')\ge\lambda\}$. For each $j$, we choose a tube $T_j^\delta$ to be the  $\delta$-tube about some $\gamma_{x_j'}\in G$ such that
\[|E\cap T_j^\delta|\ge\lambda|T_j^\delta|,\]
then \eqref{eqn1} would follow from the uniform bounds

\begin{equation}\label{eqn2}
M\delta^{d-1}\lesssim_\epsilon(\lambda^{-\frac{d+2}{2}}\delta^{\frac{2-d}{2}}|E|)^{\frac{2d-2}{d}},
\end{equation}

Indeed, this inequality implies the slightly stronger version of \eqref{eqn1}, where the left hand side is replaced by $|\{(0,x')\in M^d:(\chi_E)_\delta^{**}(0,x')\ge\lambda\}|$, and we replace the maximal operator by one involving averaging over $\delta$-tubes with central geodesics in $G$. 

Note since the basepoints $\{x'_j\}$ of the tubes are $\delta$-separated, we must have 
\[\angle(T_j^\delta,T_i^\delta)>c\delta,\]
for some constant $c$. Now we use the exact same multiplicity argument as the one we used for the Kakeya problem in $\mathbb R^d$.

\subsection{Multiplicity argument}

Consider parameters $\theta\in[\delta,1],$ and $\sigma \in [\lambda\delta, 1]$.
First, for $1\le j\le M$ and $x\in T_j^\delta$ fixed, let
$$\mathfrak{L}_\theta(x,j)=\{i:x\in T_i^\delta, \angle{(T_j^\delta, T_i^\delta)\in[\theta/{2},\theta)}\}$$
index the tubes $T_i^\delta$ containing $x$ which intersect the fixed tube $T_j^\delta$ at angle comparable to $\theta$. Next, let
$$\mathfrak{L}_\sigma(x,j)=\{i:x\in T_i^\delta, |T_i^\delta\cap\{y\in E: \mathrm{dist}(y,\gamma_j)\in[\sigma/{2},\sigma)\}|\ge(2^4\log_2\frac{1}{\delta^2})^{-1}\lambda|T_i^\delta|\}$$
index the tubes $T_i^\delta$ containing $x$ which intersect the fixed tube $T_j^\delta$ at $x$ such that there is non-trivial portion of $T_i^\delta\cap E$ with distance to $\gamma_j$ comparable to $\sigma$.
Now let
$$\mathfrak{L}_{\theta,\sigma}(x,j)=\mathfrak{L}_{\theta}(x,j)\cap\mathfrak{L}_{\sigma}(x,j).$$
Then we have the following
\begin{lemma}\label{lemman1}There are $N\in\mathbb{N}$ and $\theta\in[\delta, 1]$, $\sigma\in[\lambda\delta, 1]$ that fulfills the following two cases

$\mathrm{I}.$ $($Low multiplicity case$)$There are at least $M/2$ values of $j$ for which
\[\left|\left\{x\in T_j^\delta\cap E: \#\{i:x\in T_i^\delta\}\le N\right\}\right|\ge\frac{\lambda}{2}|T_j^\delta|.\]

$\mathrm{II}_{\theta,\sigma}.$ $($High multiplicity case at angle $\theta$ and distance $\sigma$$)$There are at least $M/(2(\log_2{1/\delta^2}))^2$ many values of $j$ for which
\begin{equation}\left|\left\{x\in T_j^\delta\cap E: \#\mathfrak{L}_{\theta,\sigma}\ge\frac{N}{(2\log_2\frac{1}{\delta^2})^2}\right\}\right|\ge\frac{\lambda}{(4\log_2\frac{1}{\delta^2})^2}|T_j^\delta|.\end{equation}
\end{lemma}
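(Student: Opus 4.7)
The plan is to follow essentially verbatim the proof of Lemma \ref{lemma1} in the Euclidean setting, since the argument is purely combinatorial and relies only on dyadic pigeonholing. The geometric content of the curved setting (where tubes are thickened geodesics rather than straight lines) enters only through the definition of the angle between tubes, but this definition has already been given and plays no role in the combinatorial count.

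First I would choose $N$ to be the smallest positive integer for which case $\mathrm{I}$ holds. If no such finite $N$ exists, case $\mathrm{I}$ holds trivially in the limit, so we may assume $N$ is finite and by its minimality at least $M/2$ values of $j$ satisfy
\[\left|\{x\in T_j^\delta\cap E:\#\{i:x\in T_i^\delta\}\ge N\}\right|\ge\frac{\lambda}{2}|T_j^\delta|,\]
which plays the role of the analogue of \eqref{eqK6}. Fix such a $j$ and a point $x$ in this set. Since every tube $T_i^\delta$ through $x$ (other than $T_j^\delta$ itself) makes some angle $\theta'\in(0,1]$ with $T_j^\delta$, and since the portion $T_i^\delta\cap E$ is partitioned according to the distance from $\gamma_j$ into dyadic annuli of widths in $[\lambda\delta,1]$ (we may discard the portion at distance $\lesssim\lambda\delta$ from $\gamma_j$, since its contribution to $|T_i^\delta\cap E|$ is at most $\lambda|T_i^\delta|/(2\cdot 2^4\log_2(1/\delta^2))$ by the thickness estimate), we can partition $\{i:x\in T_i^\delta\}$ into at most $(\log_2(1/\delta))\cdot(\log_2(1/(\lambda\delta)))\le (\log_2(1/\delta^2))^2$ classes indexed by dyadic $\theta=2^m\delta$ and $\sigma=2^n\lambda\delta$, where the $\sigma$-class is defined via the averaged fraction threshold in $\mathfrak{L}_\sigma(x,j)$.

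Pigeonholing over these $(m,n)$ pairs yields integers $1\le m_{x,j}\le\log_2(1/\delta)$ and $1\le n_{x,j}\le\log_2(1/\delta^2)$ with
\[\#\mathfrak{L}_{2^{m_{x,j}}\delta,\,2^{n_{x,j}}\lambda\delta}(x,j)\ge\frac{N}{(2\log_2(1/\delta^2))^{2}}.\]
For each fixed $j$ in the selected collection, pigeonholing again over the (at most $(\log_2(1/\delta^2))^2$ many) pairs $(m_{x,j},n_{x,j})$ as $x$ ranges over the high-multiplicity subset of $T_j^\delta\cap E$ gives uniform $(m_j,n_j)$ producing a subset of measure at least $\frac{\lambda}{4(\log_2(1/\delta^2))^{2}}|T_j^\delta|$ on which the above inequality holds. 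A final pigeonhole across the $M/2$ indices $j$ (again with $(\log_2(1/\delta^2))^2$ pigeonholes) produces the uniform scales $\theta=2^m\delta$, $\sigma=2^n\lambda\delta$ required by case $\mathrm{II}_{\theta,\sigma}$, with at least $M/(2(\log_2(1/\delta^2)))^{2}$ valid values of $j$.

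There is no genuine obstacle here: the only mild subtlety is bookkeeping the constants so that after losing one factor of $2$ at the $\sigma$-truncation step (absorbing the contribution from distances $<\lambda\delta$ into the error) and further factors of $4$ at each pigeonhole, the thresholds stated in cases $\mathrm{I}$ and $\mathrm{II}_{\theta,\sigma}$ still come out correctly. In particular, the constant $(2^4\log_2(1/\delta^2))^{-1}$ in the definition of $\mathfrak{L}_\sigma$ (which is larger by a factor of $8$ than in the Euclidean Lemma \ref{lemma1}) is precisely what allows us to discard the small-distance slab without disturbing the high-multiplicity pigeonhole. No use of the curvature is needed beyond the fact that the notion of angle on the unit tangent bundle is well-defined and the tubes $T_i^\delta$ are intrinsic neighborhoods, so the proof transfers to $(M^d,g)$ without modification.
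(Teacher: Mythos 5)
Your argument is correct and follows essentially the same route as the paper's, which simply runs the proof of Lemma \ref{lemma1} verbatim: take the minimal $N$ for case $\mathrm{I}$, pigeonhole each point's tubes dyadically in angle and in distance to $\gamma_j$, then pigeonhole over $x\in T_j^\delta\cap E$, then over $j$. One small overstatement: the slab $\{y:\mathrm{dist}(y,\gamma_j)<\lambda\delta\}$ contributes at most $C\lambda^{d-2}\delta^{d-1}/C_0\le\frac{1}{2}\lambda|T_i^\delta|$ to $|T_i^\delta\cap E|$ (from the tube-intersection estimate and the $C_0\delta$ angular separation of the tubes), not the logarithmically small quantity you claim --- but losing half is all the subsequent pigeonhole needs.
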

The proof is identical to that of Lemma \ref{lemma1}. We also have the same bound for $\sigma/\theta$ as in the remark of Lemma \ref{lemma1} for the same reason.
\begin{equation}\label{eqn3}\lambda\lesssim_\epsilon\frac{\sigma}{\theta}\lesssim 1.\end{equation}

\subsection{Auxiliary maximal function}

Throughout this section, we fix a tube $\bm T^\delta$. We follow Sogge's strategy in \cite{sogge} closely and generalize it to any dimension $d\ge3$. We work in the Fermi normal coordinates near the central geodesic $\bm\gamma$ of $\bm T^\delta$.

We now define the auxiliary maximal function for 

$$A_{\delta}^{\theta,\sigma}(f)(x')=\sup\limits_{T_{\gamma_{x'}}\in S_{x'}}\frac{1}{|T_{\gamma_{x'}}^\delta|}\int_{T_{\gamma_{x'}}^\delta\cap \{y:|y'|\in[\sigma/2,\sigma]\}}|f(y)|dy,$$
where  the supremum runs through the collection of tubes 
$$S_{x'}=\{T^\delta_{\gamma_{x'}}:(0,x')\in\gamma_{x'},\ \gamma_{x'}\cap\bm\gamma\neq\emptyset,\angle(\gamma_{x'},\bm\gamma)\in[\theta/2,\theta)\},$$
 and define $A_{\delta}^{\theta,\sigma}(f)({x'})$ to be zero if $S_{x'}=\emptyset.$
\begin{theorem} With $A_{\delta}^{\theta,\sigma}$ as above, then we have
\begin{equation}\label{eqn9}\|A_{\delta}^{\theta,\sigma}(f)\|_{L^2}\lesssim\left(\log\frac{1}{\delta}\right)^\frac{1}{2}\left(\frac{\theta}{\sigma}\right)^\frac{d-2}{2}\|f\|_{L^2}.\end{equation}

\end{theorem}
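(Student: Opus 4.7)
The plan is to adapt the proof of Theorem \ref{theorem1} to the constant curvature setting, exploiting the two geometric features that are already in place: Fermi normal coordinates along the central geodesic $\bm\gamma$ of $\bm T^\delta$, and the fact that in constant curvature two intersecting geodesics span a unique $2$-dimensional totally geodesic submanifold. First I would discretize the set of tubes entering into the supremum defining $A_\delta^{\theta,\sigma}$: among admissible directions at the base points $(0,x')$, select a maximal $\delta/\sin\theta$-separated subset $\{v_k\}\subset S^{d-2}$, so that each tube $T_{\gamma_{x'}}^\delta\in S_{x'}$ is labelled by one of the $v_k$'s together with a base point of size $O(\delta)$ within the corresponding spherical cap. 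This is the curved analogue of the decomposition into the sets $\Gamma_k$ in the Euclidean proof.

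Next, for each $v_k$ let $\Sigma_k$ be the unique $2$-dimensional totally geodesic submanifold containing $\bm\gamma$ whose tangent $2$-plane at $x_0\in\bm\gamma$ is spanned by $e_1$ and $v_k$; this is where constant curvature is essential. By the remark following Lemma \ref{lemma2} on geodesic divergence, any tube $T_{\gamma_{x'}}^\delta$ indexed by $v_k$ lies inside a $C\delta$-neighborhood $V_k$ of $\Sigma_k$. The curved version of Lemma \ref{lemma2}, applied with $r_1\sim\theta$ and $r_2\sim\sigma$, then gives the overlap bound
\[\sum_k\chi_{V_k\cap\{y:|y'|\in[\sigma/2,\sigma]\}}(y)\lesssim\left(\frac{\theta}{\sigma}\right)^{d-2},\]
so that, by Cauchy--Schwarz in $k$, it suffices to prove the single-slab estimate
\[\|A_\delta^{\theta,\sigma}(f\chi_{V_k})\|_{L^2(\{x':\text{direction }v_k\})}\lesssim\left(\log\tfrac{1}{\delta}\right)^{1/2}\|f\chi_{V_k}\|_{L^2}.\]

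To establish the single-slab bound I would foliate $V_k$ by the $(d-2)$-parameter family of totally geodesic $2$-submanifolds $\mathbf P(y'')$ that are ``parallel'' translates of $\Sigma_k$, using the Fermi transverse coordinates $y''$, so that each admissible tube $T_{\gamma_{x'}}^\delta$ slices in $\mathbf P(y'')$ as a $2$-dimensional curved $\delta$-tube. Writing the $T_{\gamma_{x'}}^\delta$-average of $f$ as an integral of the corresponding $2$-dimensional averages over $|y''|\lesssim\delta$ and then passing to the supremum over admissible directions yields the pointwise bound
\[A_\delta^{\theta,\sigma}(f)(x')\lesssim\delta^{-(d-2)}\int_{|y''|\lesssim\delta}\bigl(\mathcal M_\delta^{\mathbf P(y'')}f(\cdot,y'')\bigr)(\phi(x'))\,dy'',\]
where $\mathcal M_\delta^{\mathbf P(y'')}$ is the $2$-dimensional Nikodym maximal function on the curved surface $\mathbf P(y'')$ and $\phi(x')$ is the transverse angular parameter. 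Applying Minkowski's inequality and then the optimal curved $2$-dimensional $L^2$ Nikodym bound of Mockenhaupt--Seeger--Sogge \cite{soggeg}, which gives a logarithmic loss exactly as in Cordoba's planar bound, completes the single-slab estimate, and summing over $k$ gives \eqref{eqn9}.

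The main obstacle I expect is a careful verification that the foliation of $V_k$ by the totally geodesic surfaces $\mathbf P(y'')$ is well-defined on the full scale $\alpha$ and that the admissible tubes really do slice in $\mathbf P(y'')$ as $2$-dimensional $\delta$-tubes of the form the Mockenhaupt--Seeger--Sogge theorem applies to, with distortion constants depending only on the curvature of $(M^d,g)$. Constant curvature is used twice here: once to guarantee that a pair of intersecting geodesics lies in a unique totally geodesic surface (so each $T_{\gamma_{x'}}^\delta$ with direction $v_k$ really is trapped in $V_k$), and once to ensure that the family $\{\mathbf P(y'')\}$ foliates $V_k$ without the tangent $2$-planes tilting, which would destroy the geometry of the slicing. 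Once these geometric points are checked, the remainder of the argument is a direct transcription of the Euclidean proof of Theorem \ref{theorem1}.
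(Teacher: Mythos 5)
Your proposal follows essentially the same route as the paper: the same $\delta/\sin\theta$-separated decomposition into slabs $V_k$ around totally geodesic Fermi $2$-planes, the same overlap count $(\theta/\sigma)^{d-2}$ from the curved version of Lemma \ref{lemma2}, and the same reduction of each slab to the Mockenhaupt--Seeger--Sogge $2$-dimensional $L^2$ Nikodym bound. The one obstacle you flag---whether every leaf $\mathbf{P}(y'')$ is totally geodesic---is sidestepped in the paper by requiring only the central leaf $\mathbf{P}(0)$ to be totally geodesic and projecting each slice $\mathbf{P}(y'')\cap T^{\delta}_{\gamma_{x'}}$ onto $\mathbf{P}(0)$, where it is contained in a genuine $2$-dimensional geodesic tube $T^{C''\delta}_{\gamma_{(0,x_2)}}$.
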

\begin{proof} Write $A_{\delta}^{\theta,\sigma}(f)$ simply as $A(f)$. The proof is very similar to the proof of Theorem \ref{theorem1}. We estimate the integral
$$\int|A(f)|^2({x'})dx.$$

Noticing that if we require $S_{x'}\neq\emptyset$, then $|x'|\le C\sin\theta$ for some $C$ that only depends on the curvature. We define the subset $C_\theta $ in the base hyperplane $\{x\in(M^d,g): x_1=0\}$ by
$$C_\theta=\{x':|x'|\le C\sin\theta\}.$$

\begin{figure}
  \centering
    \includegraphics[height=8cm]{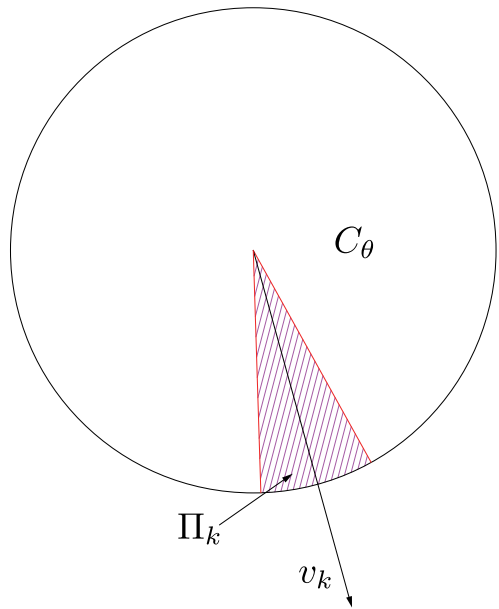}
  \caption{$\Pi_k$ in the base hyperplane}
  \label{4}
\end{figure}

Take a maximal $\frac{\delta}{\sin\theta}$-separated subset $\{v_k\}$ of $S^{d-2}$, which has cardinality comparable to $[(\frac{\theta}{\delta})^{d-2}]$. Let $\Pi_k$ be the conic set in $\{x:x_1=0\}$ such that 
$$\Pi_k\cap \sin\theta\cdot S^{d-1}=B(\sin\theta\cdot v_k,\delta)\cap \sin\theta\cdot S^{d-1},$$ see Figure \ref{4}. As in proof of Theorem \ref{theorem1}, we must have $\cup_k\Pi_k\supset C_\theta$. And by the maximality of $\{v_k\}$, we can further assume $\Pi_k$'s to be pairwise disjoint.

Consider $x'\in\Gamma_k$ for some k. Let 
$$H_k=\textrm{span}\{e_1,(0,v_k)\},$$
Then $H_k$ would be totally geodesic as a Fermi 2-plane. Remember that we require $\bm \gamma\cap\gamma_{x'}\neq\emptyset$, so any tube $T_{\gamma_{x'}}^\delta\in S_{x'}$ must lie in a $C\delta$-neighborhood $H_k^{C\delta}$. Where $C$ is again some suitable constant that only depends on the curvature.
Let
$$V_k=\{x:|x_1|\le1\}\cap H_k^{C\delta},$$
then by the remark of Lemma \ref{lemma2}, we have
$$\sum\limits_k\chi_{V_k\cap \{y:|y'|\in[\sigma/2,\sigma)\}}(y)\lesssim \left(\frac{\theta}{\sigma}\right)^{d-2}.$$

Similar to the Kakeya case in $\mathbb{R}^d$, we conclude using the above fact and a twofold application of Schwarz's inequality, the theorem would follow from the following $L^2$ estimate for each $k$,
\begin{equation}\label{eqn8}\|A(f\chi_{V_k})\|_{L^2(\Pi_k)}\lesssim\left(\log\frac{1}{\delta}\right)^\frac{1}{2}\|f\chi_{V_k}\|_{L^2}.\end{equation}

To prove \eqref{eqn8}, we need a curved version of the 2-dimensional Nikodym maximal inequality.

To state it we now suppose that $(M^2,g)$ is a 2-dimensional Riemannian manifold. If we fix a geodesic $\gamma_0\subset M^2$ of length $\alpha\le\min\{1,(\mathrm{inj}M^2)/2\}$, we consider all geodesic $\{\gamma\}$ of this length which are close to $\gamma_0$. Let $\gamma_1(t)$ be a geodesic which intersects $\gamma_0$ orthogonally and is parameterized by arc length. We set
\begin{equation}
M_{\delta} g(t)=\sup\limits_{\gamma_1(t)\in\gamma}\delta^{-1}\int_{\{y:\mathrm{dist}(y,\gamma)\le\delta\}}|g(y)|dy.
\end{equation}
We claim \eqref{eqn8} would follow from
\begin{equation}\label{eqn10}
\|M_{\delta} g\|_{L^2(dt)}\lesssim\left(\log\frac{1}{\delta}\right)^\frac{1}{2}\|g\|_{L^2(M^2)}.
\end{equation}
This is (2.43) in \cite{sogge}, and we refer readers to \cite{sogge} and \cite{soggem} for the proof.

Now we show how \eqref{eqn10} implies \eqref{eqn8}. We use the same trick as we did for the Kakeya problem in Euclidean case. Without loss of generality, we fix $k$, assume $e_2=(0,v_k)$ and only consider functions $f$ with support contained in $V_k\cap\{y:|y'|\in[\sigma/2,\sigma)\}$. 

\begin{figure}
  \centering
    \includegraphics[height=8cm]{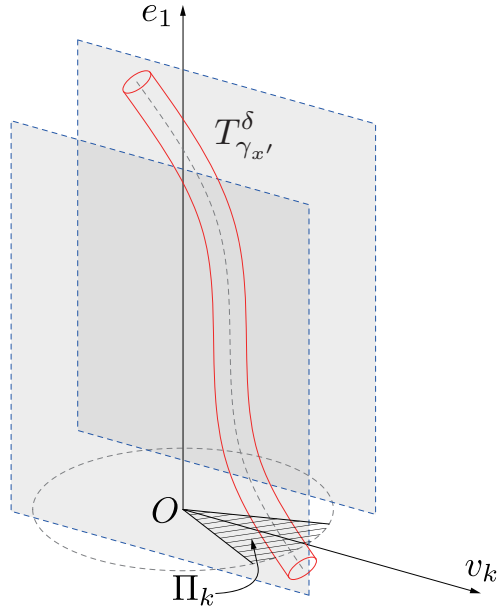}
  \caption{$T_{\gamma_{x'}}^\delta$ contained in $V_k$}
  \label{5}
\end{figure}

Let $\mathbf{P}(s)$ be the surface which corresponds to the 2-plane $\{y\in (M^d,g):y=(y_1,y_2,s)\}$ with volume element $dV_s$, where $s$ is a $(d-2)$-dimensional parameter for the collection of those 2-planes with $|s|\le C\delta$. Since $\mathbf{P}(0)=\mathrm{span}\{e_1,e_2\}$ is a totally geodesic 2-plane and we are in constant curvature case, $|dV_0|\sim |dy_1dy_2|$.

For any $x=(0,x')=(0,x_2,x'')\in\Pi_k$, we consider the integral over the cross section $\mathbf{P}(s)\cap T_{\gamma_{x'}}^\delta$. Clearly, the projection of this cross section on to $\mathbf{P}(0)$ is contained in $\mathbf{P}(0)\cap T_{\gamma_{x'}}^{C'\delta}$ for some constant $C'$. Noticing the fact that $dV_s$ varies smoothly with respect to $s$, we see that for fixed $s$ with $|s|\le C\delta$
\[\int_{\mathbf{P}(s)\cap T_{\gamma_{x'}}^\delta}|f(y_1,y_2,s)|dV_s\lesssim\int_{\mathbf{P}(0)\cap T_{\gamma_{x'}}^{C'\delta}}|f(y_1,y_2,s)|dV_0\lesssim\int_{\mathbf{P}(0)\cap T_{\gamma_{x'}}^{C'\delta}}|f(y_1,y_2,s)|dy_1dy_2.\]
Since $\mathbf{P}(0)$ is totally geodesic, $\mathbf{P}(0)\cap T_{\gamma_{x'}}^{C'\delta}$ is contained in $\mathbf{P}(0)\cap T_{\gamma_{(0,x_2)}}^{C''\delta}$ for some $\gamma_{(0,x_2)}$ and $C''$. Then we have
 $$\begin{aligned}
\delta^{-(d-1)}\int_{T_{\gamma_{x'}}^\delta}|f(y)|dy&=\delta^{-(d-1)}\int_{|y''|\le C\delta}dy''\int_{\mathbf{P}(y'')\cap T_{\gamma_{x'}}^\delta}|f(y_1,y_2,y'')|dV_{y''}\\
&\le\delta^{-(d-1)}\int_{|y''|\le C\delta}dy''\int_{\mathbf{P}(0)\cap T_{\gamma_{x'}}^{C'\delta}}|f(y_1,y_2,y'')|dy_1dy_2\\
&\le\delta^{-(d-1)}\int_{|y''|\le C\delta}dy''\int_{\mathbf{P}(0)\cap T_{\gamma_{(0,x_2)}}^{C''\delta}}|f(y_1,y_2,y'')|dy_1dy_2\\
&\lesssim\delta^{-(d-2)}\int_{|y''|\le C\delta}M_\delta(f(\ldots,y''))(x_2)dy'',
\end{aligned}$$

Therefore,
\[A(f)(x')\lesssim\delta^{-(d-2)}\int_{|y''|\le C\delta}M_\delta(f(\ldots,y''))(x_2)dy''.\]

Integrating over $x_1, x_2$ and using Minkowski's inequality, we get
$$\begin{aligned}
&\ \ \ \ \left(\int_{|x_2|\le 1}|A(f)(x')|^2dx_2\right)^\frac{1}{2}\\
&\lesssim\delta^{-(d-2)}\int_{|y''|\le C\delta}dy''\left(\int_{|x_2|\le 1}|M_\delta(f(\ldots,y''))|^2(x_2)dx_2\right)^{\frac{1}{2}}\\
&\lesssim\left(\log\frac{1}{\delta}\right)^\frac{1}{2}\delta^{-(d-2)}\int_{|y''|\le C\delta}\|f(\ldots,y'')\|_{L^2(y_1,y_2)}dy''\\
&\lesssim\left(\log\frac{1}{\delta}\right)^\frac{1}{2}\delta^{-\frac{(d-2)}{2}}\|f\|_{L^2}.
\end{aligned}$$

Noticing $|x''|\lesssim \delta$ for $x\in V_k$, this leads to \eqref{eqn8}, so the proof is complete.
\end{proof}

\subsection{A key lemma}
This section is parallel to section 2.4. From now on, let $N$ be the number that fulfills both case I and $\mathrm{II}_{\theta,\sigma}$, and again we fix a index $j$ such that $\bm T^\delta=T_{j}^\delta$ satisfy $\mathrm{II}_{\theta,\sigma}$. Using our $L^2$ estimate for the auxiliary maximal function, we will show that we can generalize Proposition 2.5 in \cite{sogge} to any dimension $d\ge3$.

\begin{lemma}\label{lemman2}For any $\epsilon>0$, any point $a$
\begin{equation}\label{eqn6}|E\cap B(a,\delta^\epsilon\lambda)^c\cap \bm T^\sigma|\gtrsim_\epsilon\lambda^d N\sigma\delta^{d-2}.\end{equation}
\end{lemma}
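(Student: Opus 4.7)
The plan is to run the Euclidean proof of Lemma~\ref{lemma3} essentially unchanged, using the curved auxiliary maximal function bound~\eqref{eqn9} in place of~\eqref{eqK7} and a curved tube intersection estimate in place of the Euclidean one. First, I would reduce \eqref{eqn6} to the weaker statement $|E\cap \bm T^\sigma|\gtrsim_\epsilon\lambda^d N\sigma\delta^{d-2}$: the ball $B(a,\delta^\epsilon\lambda)$ has volume at most $C(\delta^\epsilon\lambda)^d$, which (for $\delta$ small, using $\lambda\ge\delta$ and $M\delta^{d-1}\lesssim 1$) is much smaller than half the desired lower bound, so one may replace $E$ by $E\cap B(a,\delta^\epsilon\lambda)^c$ throughout, exactly as in \cite{sogge}.

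Second, set $\bm S^\delta=\bm T^\delta\cap E\cap\{x:\#\mathfrak{L}_{\theta,\sigma}(x,j)\ge N/(4(\log_2(1/\delta^2))^2)\}$, which by case $\mathrm{II}_{\theta,\sigma}$ of Lemma~\ref{lemman1} satisfies $|\bm S^\delta|\gtrsim_\epsilon\lambda\delta^{d-1}$. For each $x\in\bm S^\delta$ choose a subcollection of $\{T_i^\delta\}$ indexed by $\mathfrak{L}_{\theta,\sigma}(x,j)$ and take the union over $x\in\bm S^\delta$ to produce $\{T_{i_k}^\delta\}_{k=1}^{M_0}$ with $\sum_k\chi_{T_{i_k}^\delta}\gtrsim_\epsilon N$ on $\bm S^\delta$. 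Next, observe that on a manifold of constant curvature, two $\delta$-tubes of length $\alpha$ that intersect at angle $\sim\theta$ still satisfy $|T_i^\delta\cap T_j^\delta|\lesssim\delta^d/\theta$: Lemma~\ref{lemman0} (applied with parameter $\sim\delta/\theta$) confines the intersection to a ball of radius $\sim\delta/\theta$ about any common point, and a $\delta$-wide tube intersected with such a ball has volume $\lesssim(\delta/\theta)\cdot\delta^{d-1}$. Combining $|\bm S^\delta|\lesssim_\epsilon N^{-1}M_0\delta^d/\theta$ with the lower bound on $|\bm S^\delta|$ then yields $M_0\gtrsim_\epsilon\theta\delta^{-1}N\lambda$.

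Third, by the definition of $\mathfrak{L}_\sigma$, each $k$ satisfies $\delta^{-(d-1)}\int_{T_{i_k}^\delta\cap\{\mathrm{dist}(y,\gamma_j)\in[\sigma/2,\sigma)\}}\chi_E\,dy\gtrsim_\epsilon\lambda$, and the left-hand side is bounded by $A_\delta^{\theta,\sigma}(\chi_E)(x'_{i_k})$, where $x'_{i_k}$ is the basepoint of $T_{i_k}^\delta$ in the Fermi normal coordinates. Since the $\{x'_{i_k}\}$ are $\delta$-separated in the $(d-1)$-dimensional base hyperplane, squaring, multiplying by $\delta^{d-1}$, summing in $k$, and applying~\eqref{eqn9} produce $M_0\delta^{d-1}\lambda^2\lesssim_\epsilon(\log(1/\delta))(\theta/\sigma)^{d-2}|E\cap\bm T^\sigma|$. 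Using~\eqref{eqn3} to write $(\theta/\sigma)^{d-2}\lesssim_\epsilon\theta/(\sigma\lambda^{d-3})$ and substituting the lower bound on $M_0$ then gives exactly $|E\cap\bm T^\sigma|\gtrsim_\epsilon\lambda^d N\sigma\delta^{d-2}$.

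The only substantive new ingredient compared with Lemma~\ref{lemma3} is the curved tube intersection bound $|T_i^\delta\cap T_j^\delta|\lesssim\delta^d/\theta$ at angle $\sim\theta$; this is the step to verify most carefully, but it follows cleanly from Lemma~\ref{lemman0}. Once that is in hand, the remainder is a direct transcription of the Euclidean argument, since~\eqref{eqn9} and~\eqref{eqn3} have identical shape to~\eqref{eqK7} and~\eqref{eqK12}.
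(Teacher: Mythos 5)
Your proposal follows the paper's argument for this lemma essentially step for step: the set $\bm S^\delta$, the intersection bound $|T_i^\delta\cap T_j^\delta|\lesssim\delta^d/\theta$ deduced from Lemma \ref{lemman0}, the resulting count $M_0\gtrsim_\epsilon\theta\delta^{-1}N\lambda$, and the $L^2$ argument combining \eqref{eqn9} with \eqref{eqn3} are exactly the paper's steps. The one place where what you wrote is actually incorrect is the justification of the initial reduction to $|E\cap\bm T^\sigma|\gtrsim_\epsilon\lambda^dN\sigma\delta^{d-2}$. You claim that $|B(a,\delta^\epsilon\lambda)|\le C(\delta^\epsilon\lambda)^d$ is much smaller than half the target lower bound $c_\epsilon\delta^\epsilon\lambda^dN\sigma\delta^{d-2}$. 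Dividing by $\lambda^d$, this asserts $\delta^{d\epsilon}\ll\delta^\epsilon N\sigma\delta^{d-2}$; but in the admissible worst case $N=1$, $\sigma=\lambda\delta$ the right-hand side is at most $\delta^{\epsilon+d-1}$, which for $\epsilon<1$ and small $\delta$ is far \emph{smaller} than $\delta^{d\epsilon}$. So the ball can have volume much larger than the quantity you want to prove is a lower bound, and the global subtraction argument collapses.

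The correct reduction (the reason the paper defers to \cite{sogge} and \cite{miao} here) is performed tube by tube rather than by comparing the ball's volume to the final answer. Each selected tube $T_{i_k}^\delta$ carries, by the definition of $\mathfrak{L}_\sigma(x,j)$, mass $|T_{i_k}^\delta\cap E\cap\{y:\mathrm{dist}(y,\gamma_j)\in[\sigma/2,\sigma)\}|\ge(2^4\log_2\frac{1}{\delta^2})^{-1}\lambda|T_{i_k}^\delta|$, while $B(a,\delta^\epsilon\lambda)$ meets the thin tube $T_{i_k}^\delta$ in a set of measure at most $C\delta^\epsilon\lambda\delta^{d-1}$. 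Since $\delta^\epsilon\ll(\log_2\frac{1}{\delta^2})^{-1}$ for $\delta$ small, deleting the ball costs each tube at most half of its guaranteed mass at distance $\sim\sigma$ from $\gamma_j$. One then runs your second and third steps with $f=\chi_{E\cap B(a,\delta^\epsilon\lambda)^c}$ (keeping $\bm S^\delta$ and $M_0$ defined via $E$ itself), and the conclusion comes out with $E\cap B(a,\delta^\epsilon\lambda)^c\cap\bm T^\sigma$ on the right, which is \eqref{eqn6}. A second, very minor point: the central geodesic of $T_{i_k}^\delta$ intersects $\bm T^\delta$ but need not intersect $\bm\gamma$ itself, so $T_{i_k}^\delta$ need not belong to $S_{x'_{i_k}}$; as in the paper one should dominate the tube average by $A_{C\delta}^{\theta,\sigma}(f)(y'_{i_k})$ for a dilated aperture $C\delta$ and some $y'_{i_k}$ within $C_0\delta$ of $x'_{i_k}$, which changes nothing in the subsequent $L^2$ step.
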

\begin{proof} Clearly, it suffices to prove
\begin{equation}\label{eqn4}|E\cap \bm T^\sigma|\gtrsim_\epsilon\lambda^d N\sigma\delta^{d-2}.\end{equation}
For the tube $\bm T^\delta$, we denote
$$\bm S^\delta=\bm T^\delta\cap E\cap\left\{x:\#\mathfrak{L}_{\theta,\sigma}(x,j)\ge2^{-2}N\left(\log_2\frac{1}{\delta^2}\right)^{-2}\right\}.$$
By the definition of $\mathfrak{L}_{\theta,\sigma}(x,j)$, we see that there is a $M_0\in(0,M]$ and a subcollection $\{T^\delta_{i_k}\}_{x}$ of $\{T^\delta_i\}_{i=0}^M$ that are in $\mathfrak{L}_{\theta,\sigma}(x,j)$ for each $x$, if we let $x$ run through every point in $\bm S^\delta$, and take the union of these subcollections to get $\{T^\delta_{i_k}\}_{k=1}^{M_0}$, then we will have
\[\sum\limits_{k=1}^{M_0}\chi_{T_{i_k}^\delta}\ge\frac{N}{2^2}\left(\log_2\frac{1}{\delta^2}\right)^{-2} \textrm{on}\  \bm S^\delta.\]
It follows from Lemma \ref{lemman0} that two $\delta$-tubes intersect at angle comparable to $\theta$ have intersection measure like $\frac{\delta^d}{\theta}$, so we have

\[|\bm S^\delta|\lesssim_\epsilon N^{-1}\int_{\bm T^\delta}\sum\limits_{k=1}^{M_0}\chi_{T_{i_k}^\delta}(x)dx\le N^{-1}\sum\limits_{k=1}^{M_0}|T_{i_k}^\delta\cap \bm T^\delta|\lesssim \frac{M_0\delta^d}{N\theta}.\]
Together with the simple fact
 \[|\bm S^\delta|\gtrsim_\epsilon \lambda|\bm T^\delta|,\]
we conclude
\begin{equation}\label{eqn5}M_0\gtrsim_\epsilon\theta\delta^{-1}N\lambda.\end{equation}

Now, consider the average of the function $f=\chi_{E}$ over $T_{i_k}^\delta\cap\{y:\textrm{dist}(y,\gamma_j)\in[\sigma/2,\sigma)\}$
 \[\delta^{-(d-1)}\int_{T_{i_k}^\delta}f(y)dy=\delta^{-(d-1)}|T_{i_k}^\delta\cap E\cap \{y:\textrm{dist}(y,\gamma_j)\in[\sigma/2,\sigma)\}|\gtrsim_\epsilon\lambda,\]
 On the other hand, for some large $C$ that only depends on curvature and some $y'_{i_k}$ that lies in the $C_0\delta$-neighborhood of $x'_{i_k}$, we have
 \[\delta^{-(d-1)}\int_{T_{i_k}^\delta}f(y)dy\le A_{C\delta}^{\theta,\sigma}(f)(y'_{i_k}),\]
 
After combining these two inequalities,  we square both sides, multiply $\delta^{d-1}$ and sum up with respect to $k=1,\ldots,M_0$, arriving at
 \[\begin{aligned}M_0\delta^{d-1}\lambda^2&\lesssim_\epsilon\sum\limits_{k=1}^{M_0}|A_{C\delta}^{\theta,\sigma}(f)(y'_{i_k})|^2\delta^{d-1}\\
 &\lesssim\|A_{C\delta}^{\theta,\sigma}(f)(y')\|_{L^2}^2\\
 &\lesssim_\epsilon\frac{\theta^{d-2}}{\sigma^{d-2}}|E\cap \{y:\textrm{dist}(y,\gamma_j)\in[\sigma/2,\sigma)\}|\\&\lesssim_\epsilon\frac{\theta}{\sigma\lambda^{d-3}}|E\cap \bm T^\sigma|,\end{aligned}\]
 where we used the maximality of the $\{x'_k\}$, \eqref{eqn3} and \eqref{eqn9}. Using \eqref{eqn5} for the estimate of $M_0$, we get \eqref{eqn4}.
 \end{proof}
\subsection{Completion of the proof}
Again, we give the estimate corresponding to the high and low multiplicity cases separately.

As what happened in the Euclidean case, if $N$ satisfy $\mathrm I$, it's easy to see that
\begin{equation}\label{eqn7}|E|\gtrsim\frac{\lambda M\delta^{d-1}}{N}.\end{equation}

In order to estimate the high multiplicity case, we need to use a curved version of the bush argument, which is basically the following lemma(\cite{soggem}):
\begin{lemma} Suppose there are $M$  tubes $\{T_j^\sigma\}_{j=1}^M$ such that $j\neq j'$ and $T_j^\sigma\cap T_{j'}^\sigma\neq\emptyset$ implies $\angle(T_j^\sigma, T_{j'}^\sigma)\ge C\gamma$ for some $0<\gamma<1$. Assume also that for some $\rho>0$ and any $a\in\mathbb{R}^d$, there are $M_0$ such tubes satisfying 
\begin{equation}
\rho|T_j^\sigma|\le|T_j^\sigma\cap E\cap B(a,\sigma/\gamma)^c|.
\end{equation}
Then if $C$ is large enough, we have
\begin{equation}
|E|\gtrsim \rho\sigma^{d-1}M_0^{1/2}.
\end{equation}
\label{lemman3}
\end{lemma}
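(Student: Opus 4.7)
The plan is to mimic the proof of Lemma \ref{lemma4} essentially verbatim, replacing the trivial Euclidean estimate on the diameter of the intersection of two transverse tubes with its curved counterpart, namely Lemma \ref{lemman0}. The statement of Lemma \ref{lemman0}, applied with $\delta$ replaced by the tube width $\sigma$ and $\lambda$ replaced by $\sigma/\gamma$, says that if two $\sigma$-tubes meet at angle at least $\sigma/(c\cdot\sigma/\gamma)=\gamma/c$, then their intersection is contained in any ball of radius $\sigma/\gamma$ centered at a common point. Taking the constant $C$ in the hypothesis of the lemma to satisfy $C\ge 1/c$ therefore encodes exactly the curved replacement of the elementary Euclidean geometric fact used in Lemma \ref{lemma4}.

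First I would extract the $M_0$ tubes $\{T_j^\sigma\}_{j=1}^{M_0}$ provided by the hypothesis. Since each of these satisfies $|T_j^\sigma\cap E\cap B(a,\sigma/\gamma)^c|\ge\rho|T_j^\sigma|$ (in particular $|T_j^\sigma\cap E|\ge\rho|T_j^\sigma|$), a double counting argument yields
\[
\rho\sigma^{d-1}M_0\lesssim\sum_{j=1}^{M_0}|T_j^\sigma\cap E|=\int_E\sum_{j=1}^{M_0}\chi_{T_j^\sigma}(x)\,dx,
\]
so a pigeonhole step produces a point $x_0\in E$ with $\sum_j\chi_{T_j^\sigma}(x_0)\gtrsim\rho\sigma^{d-1}M_0/|E|$. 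That is, a \emph{bush} of at least $\rho\sigma^{d-1}M_0/|E|$ of the chosen $\sigma$-tubes pass through $x_0$.

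Next I would apply Lemma \ref{lemman0} with $\lambda=\sigma/\gamma$ to every pair $j\ne j'$ with $x_0\in T_j^\sigma\cap T_{j'}^\sigma$. Because the angle between any two of the tubes is at least $C\gamma\ge\gamma/c$, the lemma forces $T_j^\sigma\cap T_{j'}^\sigma\subset B(x_0,\sigma/\gamma)$. Consequently, the trimmed tubes $\{T_j^\sigma\cap B(x_0,\sigma/\gamma)^c\}$, indexed over $j$ with $x_0\in T_j^\sigma$, are pairwise disjoint, and we can add up the contributions from each:
\[
|E|\ge\Bigl|E\cap B(x_0,\sigma/\gamma)^c\cap\bigcup_{\{j:x_0\in T_j^\sigma\}}T_j^\sigma\Bigr|=\sum_{\{j:x_0\in T_j^\sigma\}}|E\cap B(x_0,\sigma/\gamma)^c\cap T_j^\sigma|.
\]
Applying the hypothesis at $a=x_0$ bounds each term from below by $\rho|T_j^\sigma|\gtrsim\rho\sigma^{d-1}$, and combining with the lower bound on the number of summands obtained above gives $|E|\gtrsim\rho^2\sigma^{2(d-1)}M_0/|E|$. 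Rearranging yields $|E|\gtrsim\rho\sigma^{d-1}M_0^{1/2}$, as desired.

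The main conceptual obstacle would be getting the geometric localization right in curved space; but that work has already been done in Lemma \ref{lemman0}, which was explicitly designed for this purpose. The remaining steps are purely combinatorial double counting and require no modification relative to the Euclidean case.
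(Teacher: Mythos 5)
Your proposal is correct and is essentially the paper's own argument: the paper proves this lemma by noting that Lemma \ref{lemman0} gives the diameter bound $\sigma/\gamma$ for $T_j^\sigma\cap T_{j'}^\sigma$ and then running the bush/double-counting proof of Lemma \ref{lemma4} verbatim, exactly as you do. Your explicit identification of the parameters in Lemma \ref{lemman0} (width $\sigma$, ball radius $\sigma/\gamma$, hence angle threshold $\gamma/c$ and the choice $C\ge 1/c$) is the only content the paper leaves implicit.
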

By Lemma \ref{lemman0}, the diameter of $T_{j'}^\sigma\cap T^\sigma_{j}$ is like $\sigma/\gamma$, thus the proof of this lemma is identical to the proof of Lemma \ref{lemma4}.

Finally, we estimate the high multiplicity case to finish the proof.
\begin{lemma} Let N satisfy $\mathrm {II}_{\theta,\sigma}$. Then we have
\begin{equation}\label{eqn7}
|E|\gtrsim_\epsilon\lambda^{d+1}N(M\delta^{d-1})^{\frac{1}{d-1}}\delta^{d-2}\end{equation}
\end{lemma}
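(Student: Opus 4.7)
The plan is to mirror the Euclidean proof of \eqref{eqK13} in the constant curvature setting, using Lemmas \ref{lemman0}, \ref{lemman1}, \ref{lemman2}, and \ref{lemman3} in place of their Euclidean counterparts. First, from Lemma \ref{lemman1} I would pick out the subcollection $\{T_j^\delta\}_{j=1}^{[cM(\log_2(1/\delta^2))^{-2}]}$ that fulfills case $\mathrm{II}_{\theta,\sigma}$. Set $\gamma=\sigma/(\delta^\epsilon\lambda)$; by \eqref{eqn3} we have $\gamma\gtrsim_\epsilon\delta^{1-\epsilon}$. If $\gamma\gtrsim 1$, then Lemma \ref{lemman2} applied at scale $\delta^\epsilon\lambda$, combined with the trivial bound $M\delta^{d-1}\lesssim 1$, gives the desired inequality directly, since in this regime $\sigma\gtrsim_\epsilon\lambda\ge\lambda(M\delta^{d-1})^{1/(d-1)}$.

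In the main case $\gamma\ll 1$, I would extract a maximal $\gamma$-separated (with respect to the unit-tangent-bundle metric used to define the angle between tubes) subcollection of these tubes, of size $M_0$. Since the basepoints $\{x_j'\}$ are $C_0\delta$-separated, the associated tubes are automatically $\gtrsim\delta$-separated in this same metric, so the standard packing count yields
\[
M_0\gtrsim_\epsilon M\delta^{d-1}\left(\frac{\lambda}{\sigma}\right)^{d-1}.
\]

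Next, I would verify the hypotheses of Lemma \ref{lemman3} for the corresponding collection of $\sigma$-tubes $\{T_j^\sigma\}$. By a rescaled version of Lemma \ref{lemman0}, two $\sigma$-tubes intersecting at angle $\gtrsim\gamma$ meet in a set of diameter $\lesssim\sigma/\gamma=\delta^\epsilon\lambda$, so one may select the constant $C$ in Lemma \ref{lemman3} to match. The required lower bound $\rho|T_j^\sigma|\le|T_j^\sigma\cap E\cap B(a,\sigma/\gamma)^c|$ then follows directly from the key Lemma \ref{lemman2} with the choice $\rho=c_\epsilon\lambda^d\sigma^{2-d}\delta^{d-2}N$ (any $\delta^\epsilon$ factors are absorbed into the $\lesssim_\epsilon$ notation). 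Applying Lemma \ref{lemman3} yields $|E|\gtrsim_\epsilon\rho\sigma^{d-1}M_0^{1/2}$.

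The remaining step is purely algebraic and identical to its Euclidean counterpart: one uses $M_0^{1/2}\ge M_0^{1/(d-1)}$, valid for $d\ge 3$ since $M_0\ge 1$, and substitutes the lower bound on $M_0$ together with the chosen value of $\rho$ to arrive at
\[
|E|\gtrsim_\epsilon \lambda^{d+1}N(M\delta^{d-1})^{1/(d-1)}\delta^{d-2}.
\]
The main (and essentially only) place where one must be careful relative to the Euclidean argument is checking that the curved intersection bound Lemma \ref{lemman0} interacts with the scale $\gamma$ in exactly the way needed to supply both the angular hypothesis and the diameter bound of Lemma \ref{lemman3}; once this is in place, the combinatorics go through verbatim because all the preceding ingredients (the multiplicity decomposition, the auxiliary $L^2$ bound \eqref{eqn9}, and the key inequality \eqref{eqn6}) were already arranged to match their Euclidean analogues in form.
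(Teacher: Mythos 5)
Your proposal is correct and follows essentially the same route as the paper: the same dichotomy on $\gamma=\sigma/(\delta^\epsilon\lambda)$, the same maximal $\gamma$-separated subcollection with the same packing bound on $M_0$, the same choice of $\rho$ fed into the bush Lemma \ref{lemman3} via the key Lemma \ref{lemman2}, and the same closing algebra. You in fact supply slightly more detail than the paper on the $\gamma\gtrsim1$ case and on how the rescaled Lemma \ref{lemman0} furnishes the diameter hypothesis of Lemma \ref{lemman3}.
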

\begin{proof} By the multiplicity argument, we know that for some suitable constant $c$, there are at least 
$$[cM(\log_2\frac{1}{\delta^2})^{-2}]$$ many tubes in $\mathrm{II}_{\theta,\sigma}$, denote them by
$$\{T_j^\delta\}_{j=1}^{[cM(\log_2\frac{1}{\delta^2})^{-2}]}.$$
Let
$$\gamma=\frac{\sigma}{\delta^\epsilon\lambda}.$$
Then clearly $\gamma\ge\delta^{1-\epsilon}$. If $\gamma\ge 1$, then \eqref{eqn7} follows directly from \eqref{eqn6}.
Otherwise, take a maximal $\gamma$-separated subset of $\{x'_j\}_{j=1}^{[cM(\log_2\frac{1}{\delta^2})^{-2}]}$ and denote the total number of this subset to be $M_0$. By maximality, we see easily
$$M_0\gtrsim\frac{M}{\left({\log_2\frac{1}{\delta^2}}\right)^{2}}\delta^{d-1}\left(\frac{\delta^\epsilon\lambda}{\sigma}\right)^{d-1}\gtrsim_\epsilon M\delta^{d-1}\left(\frac{\lambda}{\sigma}\right)^{d-1}$$
and using \eqref{eqn6} one may easily check that if we let $\rho=C_\epsilon\lambda^d\sigma^{2-d}\delta^{d-2+\epsilon}N$ for some proper constant $C_\epsilon$ then all requirements of Lemma \ref{lemman3} are fulfilled, so we have
$$
\begin{aligned}
|E|&\gtrsim_\epsilon\lambda^d\sigma^{2-d}\delta^{d-2}N\cdot\sigma^{d-1} M_0^{\frac{1}{2}}\\
&\ge\lambda^d\sigma\delta^{d-2}N M_0^{\frac{1}{d-1}}\\
&\gtrsim_\epsilon\lambda^d\sigma\delta^{d-2}N\left({M\delta^{d-1}\left(\frac{\lambda}{\sigma}\right)^{d-1}}\right)^{\frac{1}{d-1}}\\
&=\lambda^d\sigma\delta^{d-2}N(\delta^{d-1}M)^{\frac{1}{d-1}}\lambda\sigma^{-1}\\
&\ge\lambda^{d+1}N(\delta^{d-1}M)^{\frac{1}{d-1}}\delta^{d-2},
\end{aligned}
$$
where we used the fact that $M_0^{1/2}\ge M_0^{1/(d-1)}$ since $M_0\ge1$ and $d\ge3$.
\end{proof}
Now if we take the geometric mean of \eqref{eqn5} and \eqref{eqn7}, we get \eqref{eqn2}, completing the proof of Theorem \ref{THM4}.

\section{Acknowledgement}
I would like to thank Professor C. Sogge for his guidance and patient discussions during my study. This paper would not have been possible without his generous support. It's a pleasure to thank my colleagues D. Ginsberg, S. Wang, X. Wang and S. Yu for many helpful discussions. I also would like to thank C. Miao and his group for going through an early draft of this paper. Special thanks to L. Jiang for the excellent figures.

\bibliography{paper}
\bibliographystyle{plain}

\end{document}